\newcommand{\R}{{\mathbb R}}
\newcommand{\N}{{\mathbb N}}
\newcommand{\HH}{{\mathbb H}}
\newcommand{\YY}{Y}
\newcommand{\ups}{\Upsilon_{{\cal R}}^p(\R^{m\times n})}
\newcommand{\be}{\begin{eqnarray}}
\newcommand{\com}{{\beta_{{\cal R}}\R^{m\times n}}}
\newcommand{\ee}{\end{eqnarray}}
\renewcommand{\r}{\varrho}
\renewcommand{\d}{{\rm d}}
\newcommand{\cdm}{{\cal DM}^p_{\cal R}(\O;\R^{m\times n})}
\newcommand{\ra}{\right\rangle}
\newcommand{\la}{\left\langle}
\newcommand{\md}{{\rm d}}
\renewcommand{\O}{\Omega}
\newcommand{\s}{\sigma}
\newcommand{\eps}{\epsilon}
\renewcommand{\b}{\beta}
\newcommand{\A}[1]{\langle#1\rangle}
\newcommand{\Rn}{\R^{n}}
\newcommand{\ITEM}[2]{\parbox[t]{.9cm}{{\rm #1}}\hfill\parbox[t]{153mm}{#2}\vspace*{1mm}\\}
\newcommand{\rca}{{\mathcal{M}}}
\newcommand{\prca}{\rca_{\vspace*{.5mm}1}^+}
\newcommand{\wto}{\rightharpoonup}
\begin{document}

\title*{Weak lower semicontinuity by means of anisotropic parametrized measures}
% Use \titlerunning{Short Title} for an abbreviated version of
% your contribution title if the original one is too long
\author{Agnieszka  Ka\l amajska, Stefan Kr\"{o}mer, and Martin Kru\v{z}\'{\i}k}
% Use \authorrunning{Short Title} for an abbreviated version of
% your contribution title if the original one is too long
\institute{Agnieszka  Ka\l amajska \at Institute of Mathematics,
University of Warsaw, ul.~Banacha~2, PL-02-097~Warsaw, Poland, \email{Agnieszka.Kalamajska@mimuw.edu.pl} 
\and Stefan Kr\"{o}mer  \at Institute of Information Theory and Automation,
Czech Academy of Sciences, Pod vod\'{a}renskou
v\v{e}\v{z}\'{\i}~4, CZ-182~08~Praha~8, Czech Republic, \email{skroemer@utia.cas.cz} 
\and Martin Kru\v{z}\'{\i}k (corresponding author) \at Institute of Information Theory and Automation,
Czech Academy of Sciences, Pod vod\'{a}renskou
v\v{e}\v{z}\'{\i}~4, CZ-182~08~Praha~8, Czech Republic, \email{kruzik@utia.cas.cz}}
%
% Use the package "url.sty" to avoid
% problems with special characters
% used in your e-mail or web address
%
\maketitle

\abstract*{It is well known that besides oscillations, sequences bounded only in $L^1$ can also develop concentrations, and if the latter occurs, we can at most hope for weak$^*$ convergence in the sense of measures. Here we derive a new tool to handle  mutual interferences of an oscillating and concentrating sequence with another weakly converging sequence.  We introduce a couple of explicit examples showing
a variety of possible kinds of behavior  and outline  some  applications in Sobolev spaces.}

\abstract{It is well known that besides oscillations, sequences bounded only in $L^1$ can also develop concentrations, and if the latter occurs, we can at most hope for weak$^*$ convergence in the sense of measures. Here we derive a new tool to handle  mutual interferences of an oscillating and concentrating sequence with another weakly converging sequence.  We introduce a couple of explicit examples showing
a variety of possible kinds of behavior  and outline  some  applications in Sobolev spaces.}
%%%%%%%%%%%%%%%%%%%%%%%%%%%%%%%%%%%%%%%%%%%%%%%%%%%%%%%%%%%%%%%%%%%%%%%%%%%%%%%%

\section{Introduction}
Mutual interactions of oscillations and concentrations appears in many problems of optimal control and calculus of variations. We refer, for example, to
\cite{k-r-control,c-h-k} for optimal control of dynamical systems with oscillations and  concentrations, or to \cite{licht} for a model of mechanical debonding. Analytical problems related to these phenomena in the calculus of variations are described in detail in \cite{benesova-kruzik-SIREV}.
Moreover, oscillations, concentrations, and discontinuities naturally appear in problems of the variational calculus where one is interested in weak lower semicontinuity
in  the Sobolev space $W^{1,p}(\O;\R^m)$ for a sufficiently regular domain $\O\subset\R^n$ and $m,n\ge 1$. Indeed, consider
\begin{align}
I(u):=\int_\O h(x,u(x),\nabla u(x))\,\md x\ ,
\end{align}
where $h:\bar\O\times\R^m\times\R^{m\times n}\to\R$ is continuous and such that $|h(x,r,s)|\le C(1+|r|^q+|s|^p)$ for some $C>0$, $p>1$, and $q\ge 1$ so small that $W^{1,p}(\O;\R^m)$  compactly embeds into  $L^q(\O;\R^m)$.  We would like to point out that such integrands also  appear in analysis of mechanical problems \cite{paroni1,paroni2}. If one wants to investigate   lower semicontinuity of $I$ with respect to the weak topology in $W^{1,p}(\O;\R^m)$, a usual way is to show first that
\begin{align}\label{two-lims}\lim_{k\to\infty}\int_\O h(x,u(x),\nabla u_k(x))\,\md x =\lim_{k\to\infty}\int_\O h(x,u_k(x),\nabla u_k(x))\,\md x\ .
\end{align}
for a suitable sequence $u_k\wto u$ in $W^{1,p}(\O;\R^m)$, and then to prove that the left-hand side of \eqref{two-lims} is bounded from below by $\int_\O h(x,u(x),\nabla u(x))\,\md x$. That, however, is not possible without some additional assumptions on $h$ or $\{u_k\}$. We refer to \cite{af} or \cite{ball-zhang} for such cases.
Indeed, if $p\le n$ then $u$ and $u_k$, $k\in\N$,   are  not necessarily continuous and if $\{|\nabla u_k|^p\}$  is not uniformly integrable then concentrations can interact with $\{u_k\}_{k\in\N}$.  This phenomenon is clearly visible in the following example.

 \begin{example}\label{ex:intro}
Consider  $\O=B(0,1)$, the unit ball in $\R^n$ centered at the origin, a mapping  $w\in W_0^{1,p}(B(0,1);\R^m)$, $p>1$, extended by zero to the whole space and $u_k(x):=k^{n/p-1}w(kx)$. Hence $u_k\wto u:=0$ in $W^{1,p}(B(0,1);\R^m)$  as $k\to \infty$. Assume that $h$ as above is  positively $p$-homogeneous in the last variable, i.e., $h(x,r,\alpha s)=\alpha^p h(x,r,s)$, for all $(x,r,s)$ admissible  and all $\alpha\ge 0$. Then a simple calculation yields
\begin{align}
&\liminf_{k\to\infty}\int_{B(0,1)} h(x,u_k(x),\nabla u_k(x))\,\md x=\liminf_{k\to\infty}\int_{B(0,1)} k^n h(x, k^{n/p-1} w(kx),\nabla w(kx))\,\md x\nonumber\\
&=\liminf_{k\to\infty}\int_{B(0,1)} h(\frac{y}{k}, k^{n/p-1}w(y), \nabla w(y))\,\md y\nonumber \\
&= \begin{cases}
\int_{B(0,1)} h(0,w(y),\nabla w(y))\,\md y\ & \text{ if $p=n$},\\
\int_{B(0,1)} h(0,0,\nabla w(y))\,\md y\ & \text{ if $p>n$},\\
\liminf_{k\to\infty}\int_{B(0,1)} h(y/k,k^{n/p-1}w(y),\nabla w(y))\,\md y\ & \text{ if $p<n$.}\\
 \end{cases}\ \end{align}
We see that if $p>n$ then \eqref{two-lims} really holds.  On the other hand,  if $p=n$ the map $u$ appears in the limit besides its gradient and the most complex case is $p<n$ where the limit cannot be  calculated explicitly. Notice that the sequence $\{|\nabla u_k|^p\}_{k\in\N}\subset L^1(\O)$ is uniformly bounded in this space and concentrates at $x=0$, i.e., $|\nabla u_k|^p\stackrel{*}{\wto}\|\nabla u\|^p_{L^p(\O;\R^m)}\delta_0$ in $\mathcal{M}(\overline{B(0,1)})$ as $k\to\infty$. Here $\delta_0$ denotes the Dirac measure supported at the origin and $\mathcal{M}(\overline{B(0,1)})$ denotes the set of Radon measures on $\overline{B(0,1)}$.
\end{example}
If $p=1$, concentrations of the gradient can even interact with jump discontinuities.
\begin{example}\label{ex:intro2}
Consider $\O= (0,1)$ and a sequence $\{u_k\}_{k\in\N}\subset W^{1,1}(-1,1)$ such that $u_k\to u$ in $L^q(-1,1)$ for every $1\le q<+\infty$.  We are interested in
$$\lim_{k\to\infty} \int_{-1}^1  f(u_k(x))\psi(u_k'(x))\,\md x$$
for continuous function $\psi$ such that  with  $|\psi|\le C(1+|\cdot|)$ with some constant $C>0$ and continuous  $f:\R\to\R$. If $\psi$ is the identity map then the calculation  is easy, namely
the limit equals $\liminf_{k\to\infty} (F(u_k(1))-F(u_k(-1)))$ where $F$ is the primitive of $f$.
In case of more general $\psi$, the situation is more involved. Let
$$
u_k(x):=
\begin{cases}
0 &\text{ if $-1\le x\le 0$},\\
kx &\text{ if $0\le x\le 1/k$},\\
1 &\text{ if $1/k\le x\le 1$.}
\end{cases}
$$
Assume further that $\lim_{t\to\infty} \psi(t)/t$ exists. Then it is easy to see that
\begin{align}\label{limit0}
\lim_{k\to\infty} \int_{-1}^1  f(u_k(x))\psi(u_k'(x))\,\md x=(f(0)+f(1))\psi(0)+\big(\int_0^{1}f(x)\,\md x\big)\lim_{k\to\infty}\frac{\psi(k)}{k}\ .
\end{align}
The sequence of $\{u_k'\}_{k\in\N}$ concentrates at zero which is exactly the point of discontinuity of the pointwise limit of $\{u_k\}_{k\in\N}$ which we denote by $u$.
Also notice that  $u_k'\stackrel{*}{\wto}\delta_0$ in $\mathcal{M}([-1,1])$ for   $k\to\infty$. Hence, the second term on the right-hand side of \eqref{limit0} suggests that  we should refine the definition of $u$ at zero
by saying that $u(0)$ is the  Lebesgue measure supported  on the interval of the jump of $u$, i.e., on the interval $(0,1)$. 
\end{example}

In this contribution, we introduce  a new tool which allows us to describe limits of nonlinear maps along sequences that oscillate, concentrate, and concentrations possibly interfere with discontinuities. While  oscillations are successfully treated by Young measures \cite{y} or \cite{ball3}, to handle oscillations and concentrations require finer tools as in, e.g.,  Young measures and varifolds \cite{ab} or DiPerna-Majda measures \cite{diperna-majda}. We also refer to \cite{k-r-dm} for an explicit characterization of the  DiPerna-Majda measures  and to \cite{fmp, mkak} for characterization of those measures which are generated by sequences of gradients, as well as to 
\cite{KriRin_YM_10} and \cite{BKK16} for related results in case $p=1$.

\subsection{Basic notation}\label{bn}
Let us  start with a few definitions and with the explanation of
our notation. If not said otherwise, we will  assume throughout this article that
$\O\subset\R^n$ is a bounded domain with a Lipschitz boundary.
Furthermore, $C(\O;\R^m)$ (respectively $C(\bar{\O};\R^m)$) is the space of continuous functions defined on $\O$ (respectively $\
bar{\O}$) with values in $\R^m$. Here, as well as in smilar notation for other function spaces, if the dimension of the target space is $m=1$, then $\R^m$ is omitted and we only write $C(\O)$.
In what follows $\mathcal{M}(S)$ denotes the set of regular countably
additive set functions on the Borel $\s$-algebra on a metrizable
set  $S$ (cf. \cite{d-s}), its subset, $\mathcal{M}^+_1(S)$, denotes
regular probability measures on a set $S$. We write
``$\gamma$-almost all'' or ``$\gamma$-a.e.'' if  we mean ``up to a
set with the $\gamma$-measure zero''. If $\gamma$ is the
$n$-dimensional Lebesgue measure  we omit writing $\gamma$ in the
notation. The support of a measure $\sigma\in\mathcal{M}(\O)$ is the
smallest closed set $S$ such that $\sigma(A)=0$ if $S\cap
A=\emptyset$. If $\sigma\in\mathcal{M}(\bar\O)$ we write $\sigma_s$ and
$d_\sigma$ for the singular part and density  of $\sigma$ defined
by   the Lebesgue decomposition (with respect to the Lebesgue
measure), respectively.
By $L^p(\O;\R^m)$ we denote the usual Lebesgue space of $\R^m$-valued maps.
Further, $W^{1,p}(\O;\R^m)$ where $1\le p\le+\infty$ denotes the
usual Sobolev space (of $\R^m$-valued functions) and
$W_0^{1,p}(\O;\R^m)$ denotes the completion of $C_0^\infty
(\O,\R^m)$ (smooth functions with support in $\O$) in
$W^{1,p}(\O;\R^m)$. We say that $\Omega$ has the
extension property in $W^{1,p}$ if every function $u\in
W^{1,p}(\Omega)$ can be extended outside $\Omega$ to $\tilde{u}\in
W^{1,p}(\R^n)$ and the extension operator is linear and bounded.
If $\O$ is an arbitrary domain and $u,w\in W^{1,p}(\O,\R^m)$ we
say that $u=w$ on $\partial\O$ if $u-w\in W_0^{1,p}(\O;\R^m)$.
 We denote by `w-$\lim$'  or by $\rightharpoonup$ the weak limit. Analogously we indicate weak* limits by $\stackrel{*}{\wto}$.

%\bigskip

%\bigskip

\bigskip

\subsection{Quasiconvex functions}

Let $\O\subset\R^n$ be a bounded domain. 
We say that a function $\psi:\R^{m\times n}\to\R$ is quasiconvex if
for any $s_0\in\R^{m\times n}$ and any $\varphi\in W^{1,\infty}_0(\O;\R^m)$
$$
\psi(s_0)|\O|\le \int_\O \psi(s_0+\nabla \varphi(x))\,\md x\ .$$ If
$\psi:\R^{m\times n}\to\R$ is not quasiconvex we define its
quasiconvex envelope $Q\psi:\R^{m\times n}\to\R$ as \be\label{qcdef}
Q\psi(s)=\sup\left\{h(s);\ h\le \psi;\ \mbox{$h:\R^{m\times n}\to\R$
quasiconvex }\right\}\ \ee and we put $Q\psi=-\infty$ if the set on
the right-hand side of (\ref{qcdef}) is empty. If $\psi$ is locally
bounded and Borel measurable then for any $s_0\in\R^{m\times n}$
(see \cite{dacorogna}) \be\label{relaxation}
Q\psi(s_0)=\inf_{\varphi\in W^{1,\infty}_0(\O;\R^m)} \frac{1}{|\O|}
\int_\O \psi(s_0+\nabla \varphi(x))\,\md x\ .\ee 
%If $|\psi(s)|\le
%C(1+|s|^p)$ for some $C>0$ and all $s\in\R^{m\times n}$  then
%equivalently
%$$Q\psi(s_0)=\inf_{\varphi\in W^{1,p}_0(\O;\R^m)} \frac{1}{|\O|} \int_\O \psi(s_0+\nabla \varphi(x))\,\md x\ ,$$
%as pointed out   in \cite{fmp}. We refer to \cite{murat} for the notion of $W^{1,p}$-quasiconvexity.
%
%Let us point out that
%$$Q\psi(s_0)=\inf_{\varphi\in W^{1,p}_{s_0}(\O;\R^m)} \frac{1}{|\O|} \int_\O \psi(\nabla \varphi(x))\,\md x\ ,$$
%where $W^{1,p}_{s_0}(\O;\R^m)=\{\varphi\in W^{1,p}(\O;\R^m);\ \varphi(x)=s_0x \mbox{ on $\partial\O$ }\}$.

%%% HE

%We will also need the following  elementary  result. It can be found in a more general form   e.g. in \cite[Ch.~4, Lemma~2.2]{dacorogna}  or in \cite{morrey}.
%\beginlemma\labellemma  Let $v:\R^{m\times n}\to\R$ be quasiconvex  with  $|v(s)|\le C(1+|s|^p)$, $C>0$, for all $s\in\R^{m\times n}$.
%Then there is a constant $\alpha\ge 0$ such that  for every  $s_1,s_2\in\R^{m\times n}$ it holds
%\be
%|v(s_1)-v(s_2)|\le \alpha(1+|s_1|^{p-1}+ |s_2|^{p-1})|s_1-s_2|\ .\ee
%\endlemma

\subsection{Young measures}
For $p\ge0$ we define the following  subspace of the space
$C(\R^{m\times n})$ of all continuous functions on $\R^{m\times n}$ :
$$
C_p(\R^{m\times n})=\{\psi\in C(\R^{m\times n}); \psi(s)=o(|s|^p)\mbox{ for
}|s|\rightarrow\infty\}\ ,
$$
with the obvious modification for any Euclidean space instead of $\R^{m\times n}$.
The Young
measures on a measurable set $\Lambda\subset \R^l$ 
%bounded  domain $\O\subset\Rn$ 
are weakly* measurable mappings
$x\mapsto\nu_x:\Lambda\to \rca(\R^{m\times n})$ with values in probability measures;
 and the adjective ``weakly* measurable'' means that,
for any $\psi\in C_0(\R^{m\times n})$, the mapping
$\Lambda\to\R:x\mapsto\A{\nu_x,\psi}=\int_{\R^{m\times n}}
\psi(s)\nu_x(\md s)$ is measurable in the usual sense. Let
us remind that, by the Riesz theorem the space $\rca(\R^{m\times
n})$, normed by the total variation, is a Banach space which is
isometrically isomorphic with $C_0(\R^{m\times n})^*$.  Let us
denote the set of all Young measures by ${\cal Y}(\Lambda;\R^{m\times
n})$. 

Below, we are mostly interested in the case $\Lambda=\O$, i.e., a bounded domain.
It is known that ${\cal Y}(\O;\R^{m\times n})$ is a convex
subset of $L^\infty_{\rm w*}(\O;\rca(\R^{m\times n}))\cong
L^1(\O;C_0(\R^{m\times n}))^*$, where the index ``$w*$''
indicates the property ``weakly* measurable''.  A classical result
\cite{y} is that, for every sequence $\{y_k\}_{k\in\N}$
bounded in $L^\infty(\O;\R^{m\times n})$, there exists its
subsequence (denoted by the same indices for notational
simplicity) and a Young measure $\nu=\{\nu_x\}_{x\in\O}\in{\cal
Y}(\O;\R^{m\times n})$ such that \be\label{jedna2} \forall \psi\in
C_0(\R^{m\times n}):\ \ \ \ \lim_{k\to\infty}\psi\circ y_k=\psi_\nu\ \ \
\ \ \ \mbox{ weakly* in }L^\infty(\O)\ , \ee where $[\psi\circ
y_k](x)=\psi(y_k(x))$ and \be \psi_\nu(x)=\int_{\R^{m\times
n}}\psi(s)\nu_x(\d s)\ . \ee Let us denote by ${\cal
Y}^\infty(\O;\R^{m\times n})$ the set of all Young measures which
are created by this way, i.e. by taking all bounded sequences in
$L^\infty(\O;\R^{m\times n})$. Note that (\ref{jedna2}) actually
holds for any $\psi:\R^{m\times n}\to\R$ continuous.

A generalization of this result was formulated by
Schonbek \cite{schonbek} (cf. also \cite{ball3}): if
$1\le p<+\infty$: for every sequence
$\{y_k\}_{k\in\N}$ bounded in $L^p(\O;\R^{m\times n})$ there exists its
subsequence (denoted by the same
indices) and a Young measure
$\nu=\{\nu_x\}_{x\in\O}\in{\cal Y}(\O;\R^{m\times n})$ such that
\be\label{young}
\forall \psi\in C_p(\R^{m\times n}):\ \ \ \ \lim_{k\to\infty}\psi\circ y_k=\psi_\nu\
\ \ \ \ \ \mbox{ weakly in }L^1(\O)\ .\ee
We say that $\{y_k\}$ generates $\nu$ if \eqref{young} holds.
 Let us denote by ${\cal
Y}^p(\O;\R^{m\times n})$ the set of all Young measures which are created by this
way, i.e. by taking all bounded sequences in $L^p(\O;\R^{m\times n})$.
The subset of ${\cal Y}^p(\O;\R^{m\times n})$ containing Young measures
generated by gradients of $W^{1,p}(\O;\R^m)$ maps will be denoted by
 ${\cal GY}^p(\O;\R^{m\times n})$. An explicit characterization of this set is due to Kinderlehrer and Pedregal \cite{k-p,k-p1}.

%We will use the following lemma from \cite{fmp} concerning Young measures
%from ${\cal Y}^p(\O;\R^{m\times n})$ which are generated by sequences of gradients. A similar result was also proved by Kristensen \cite{thesis}.
%
%\bigskip
%
%
%\begin{lemma}\label{fons}
%Let $1 < p<+\infty$ and  $\O\subset\R^n$ be an open bounded set and let $\{ u_k\}_{k\in\N}\subset W^{1,p}(\O;\R^m)$ be bounded. Then there is a subsequence $\{u_j\}_{j\in\N}$ and a sequence $\{z_j\}_{j\in\N}\subset W^{1,p}(\O;\R^m)$ such that
%\be\label{rk}
%\lim_{j\to\infty} \left|\{x\in\O;\ z_j(x)\ne u_j(x)\mbox{ or }  \nabla z_j(x)\ne \nabla u_j(x)\}\right|=0
%\ee
%and $\{|\nabla z_j|^p\}_{j\in\N}$ is relatively weakly compact in $L^1(\O)$.
%In particular, $\{\nabla u_j\}$ and $\{\nabla z_j\}$ generate the same Young measure.
%\end{lemma}

\bigskip

\subsection{DiPerna-Majda measures}

\subsubsection{Definition and basic properties}

 Let ${\cal R}$ be a complete
(i.e. containing constants, separating points from closed subsets
and closed with respect to the Chebyshev norm) separable  ring  of
continuous bounded functions $\R^{m\times n}\to\R$. It is known
\cite[Sect.~3.12.21]{engelking} that there is a one-to-one
correspondence ${\cal R}\leftrightarrow\b_{\cal R}\R^{m\times n}$
between such rings and metrizable compactifications of
$\R^{m\times n}$; by a compactification we mean here a compact
set, denoted by $\b_{\cal R}\R^{m\times n}$, into which
$\R^{m\times n}$ is embedded homeomorphically and densely. For
simplicity, we will not distinguish between $\R^{m\times n}$ and
its image in $\b_{\cal R}\R^{m\times n}$. Similarly, we will not
distinguish between elements of ${\cal R}$ and their unique
continuous extensions defined  on $\b_{\cal R}\R^{m\times n}$.
This means that if $i: \R^{m\times n}\rightarrow \b_{\cal
R}\R^{m\times n}$ is the homeomorphic embedding and $\psi_0\in {\cal
R}$ then the same notation is used also for $\psi_0\circ i^{-1}:
i(\R^{m\times n})\rightarrow \R$ and for its unique continuous
extension to $\com$.

 Let $\s\in\rca(\bar\O)$ be a  positive Radon measure on a closure of a bounded domain $\O\subset\R^n$. A
mapping $\hat\nu:x\mapsto \hat\nu_x$ belongs to the
space $L^{\infty}_{\rm w*}(\bar{\O},\s;\rca(\b_{\cal R} \R^{m\times n}))$ if it is weakly*  $\s$-measurable (i.e., for any $\psi_0\in C_0(\R^{m\times n})$, the mapping
$\bar\O\to\R:x\mapsto\int_{\b_{\cal R}\R^{m\times n}} \psi_0(s)\hat\nu_x(\d s)$ is $\s$-measurable in
the usual sense). If additionally
$\hat\nu_x\in\prca(\b_{\cal R}\R^{m\times n})$ for $\s$-a.a. $x\in\bar\O$
 the collection $\{\hat\nu_x\}_{x\in\bar{\O}}$ is the so-called
Young measure on $(\bar\O,\s)$ (\cite{y}, see also
\cite{ball3,r}).

DiPerna and Majda \cite{diperna-majda} shown that having a bounded
sequence in $L^p(\O;\R^{m\times n})$ with $1\le p<+\infty$ defined
on an open domain $\O\subseteq\Rn$, there exists its subsequence
(denoted by the same indices) a positive Radon measure
$\s\in\rca(\bar\O)$ and a Young measure  $\hat\nu:x\mapsto
\hat\nu_x$  on $(\bar\O,\s)$ such that  $(\s,\hat\nu)$ is
attainable by a sequence $\{y_k\}_{k\in\N}\subset
L^p(\O;\R^{m\times n})$ in the sense that $\forall g\!\in\!
C(\bar\O)\ \text{ and } \forall \psi_0\!\in\!{\cal R}$:
\be\label{basic}\lim_{k\to\infty}\int_\O g(x)\psi(y_k(x))\d x =
\int_{\bar\O}g(x)\int_{\b_{\cal R}\R^{m\times
n}}\psi_0(s)\hat\nu_x(\d s)\s(\d x)\ , \ee where \be\label{upes}
\psi\in\ups:=\{\psi_0(1+|\cdot|^p);\ \psi_0\in{\cal R}\}.\ee
 In particular,
putting $\psi_0\equiv 1\in{\cal R}$ in (\ref{basic}) we can see that
\be\label{measure} \lim_{k\to\infty}(1+|y_k|^p)\ =\ \s \ \ \ \
\mbox{ weakly* in }\ \rca(\bar\O)\ . \ee If (\ref{basic}) holds,
we say that $\{y_k\}_{\in\N}$ generates $(\sigma,\hat\nu)$. Let us
denote by ${\cal DM}^p_{\cal R}(\O;\R^{m\times n})$ the set of all
pairs $(\s,\hat\nu)\in\rca(\bar\O)\times L^{\infty}_{\rm
w*}(\bar{\O},\s; \rca(\b_{\cal R} \R^{m\times n}))$ attainable by
sequences from $L^p(\O;\R^{m\times n})$; note that, taking $\psi_0=1$
in (\ref{basic}), one can see that these sequences must be
inevitably bounded in $L^p(\O;\R^{m\times n})$.

It is well known \cite{r} that \eqref{basic} can also be rewritten with the help of classical Young measures as
\begin{align}\label{y-dm}
\lim_{k\to\infty}\int_\O g(x)\psi(y_k(x))\d x& =
\int_\O\int_{\R^{m\times n}} g(x)\psi(s)\nu_x(\md s) \d x\nonumber\\
&+ \int_{\bar\O}g(x)\int_{\b_{\cal R}\R^{m\times
n}\setminus\R^{m\times n}}\psi_0(s)\hat\nu_x(\d s)\s(\d x),\
\end{align}
where $\{ \nu_x\}_{x\in \O} \in {\cal Y}^\infty (\O,\R^{m\times n})$ and  $\{ \nu_x\}_{x\in \O}$ are as in (\ref{basic}).

There are two prominent examples of compactifications of $\R^{m\times n}$. 
The simplest example is the so-called one point compactification which corresponds to the ring of continuous bounded functions which have limits if the norm of its argument tends to infinity, i.e., we denote $\psi_0(\infty):=\lim_{|s|\to+\infty}\psi_0(s)$.

A richer compactification is the one by the sphere. In that case, we consider
the following ring of continuous bounded functions:
\begin{align}\label{spherecomp}
\mathcal{S}:=\big\{ &\psi_0\in C(\R^{m\times n}):\mbox{ there exist } c\in\R\ ,\ \psi_{0,0}\in C_0(\R^{m\times n}),\mbox{ and }  \psi_{0,1}\in C(S^{(m\times n)-1}) \mbox{ s.t. }\nonumber\\
&  \psi_0(s) = c+ \psi_{0,0}(s)+\psi_{0,1}\left(\frac{s}{|s|}\right)
\frac{|s|^p}{1+|s|^p}\mbox { if $s\ne 0$ and }  \psi_0(0)=\psi_{0,0}(0)\big\}\ ,
\end{align}
where $S^{m\times n-1}$ denotes the $(mn-1)$-dimensional unit sphere in $\R^{m\times n}$. Then $\b_{\cal
R}\R^{m\times n}$ is homeomorphic to the unit ball $\overline{B(0,1)}\subset \R^{m\times n}$ via the mapping $d:\R^{m\times n}\to B(0,1)$, $d(s):=s/(1+|s|)$ for all $s\in\R^{m\times n}$. Note that $d(\R^{m\times n})$ is dense in $\overline{B(0,1)}$.

%For every $\psi=\psi_0(1+|\cdot|^p)$ with $\psi_0\in\mathcal{S}$ from \eqref{spherecomp}   there exists a continuous and positively $p$-homogeneous function $\psi_\infty:\R^{m\times n}\to\R$ (i.e. $\psi_\infty(\alpha s)=\alpha^p \psi_\infty(s)$ for all $\alpha\ge 0 $ and $ s\in\R^{m\times n}$) such that
%\be\label{recessionf}
%\lim_{|s|\to\infty}\frac{\psi(s)-\psi_\infty(s)}{|s|^p}=0\ .
%\ee
%
%Indeed, if $\psi_0$ is as in (\ref{spherecomp}) and $\psi=\psi_0(1+|\cdot|^p)$  then set
%$$\psi_\infty(s):=\left(c+\psi_{0,1}\left(\frac{s}{|s|}\right)\right)|s|^p\mbox{ for $s\in\R^{m\times n}\setminus\{0\}$.} $$
%By continuity we define $\psi_\infty(0):=0$. It is easy to see that $\psi_\infty$ satisfies (\ref{recessionf}).
%Such $\psi_\infty$ is called the  recession function of $\psi$. It is clear that the same construction works for the Euclidean space of every dimension.

\bigskip

 The following proposition from \cite{k-r-dm} explicitly characterizes
the set of DiPerna-Majda measures $\cdm$.

\bigskip

\begin{proposition}\label{characterization}
Let $\O\subset\R^n$ be a bounded open domain such that
$|\partial\O | =0$, ${\cal R}$ be a separable complete subring of
the ring of all continuous bounded functions on $\R^{m\times n}$
and $(\s,\hat{\nu})\in \rca(\bar{\O})\times L^{\infty}_{\rm
w}(\bar{\O},\s; \rca(\b_{\cal R}\R^{m\times n}))$ and $1\le
p<+\infty$.
Then the following two statements are equivalent with each other:\\
\ITEM{(i)}{the pair $(\s,\hat\nu)$ is the DiPerna-Majda measure, i.e.
$(\s,\hat\nu)\in{\cal DM}^p_{\cal R}(\O;\R^{m\times n})$,}
\ITEM{(ii)}{The following properties are satisfied simultaneously:
\begin{enumerate}
\item
$\s$ is positive,
\item
$\s_{\hat\nu}\in\rca(\bar\O)$ defined by
$\s_{\hat\nu}(\d x)=(\int_{\R^{m\times n}}\hat\nu_x(\d s))\s(\d x)$
is
absolutely\\
 continuous with respect to the Lebesgue
measure\\
 ($d_{\s_{\hat\nu}}$ will denote its density),
\item for a.a. $x\in\O$ it holds
$$
\!\!\!\!\!\!\!\!\!\!\!\!\!\!\!\!\!\!\!\!\!\!\!\!\!\!\!\!\!\!\!\!\!
\!\!\!\!\!\!\!\!\!\!\!\!\!\!\!\!\!\!\!\!\!\!\!\!\!\ \int_{\R^{m\times n}}\hat\nu_x(\d s) >0,\ \ \ \ \ \ d_{\s_{\hat\nu}}(x)
=\left(\int_{\R^{m\times n}}\frac{\hat\nu_x(\d s)}
{1+|s|^p}\right)^{-1}\int_{\R^{m\times n}}\hat\nu_x(\d s)\ ,
$$
\item
for $\s$-a.a. $x\in\bar\O$ it holds
$$
\hat\nu_x\ge 0,\ \ \ \ \ \
\int_{\b_{\cal R}\R^{m\times n}}\hat\nu_x(\d s)=1\ .
$$
\end{enumerate}
}
\end{proposition}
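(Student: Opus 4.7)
The plan is to prove the two implications separately, exploiting \eqref{y-dm}, which links the DiPerna-Majda measure $(\sigma,\hat\nu)$ to the classical Young measure $\nu$ carried on $\R^{m\times n}$. For (i)$\Rightarrow$(ii), fix a generating sequence $\{y_k\}\subset L^p(\O;\R^{m\times n})$. Positivity of $\sigma$ is immediate from \eqref{measure} since $1+|y_k|^p\ge 0$. Testing \eqref{basic} with $\psi_0\equiv 1$ against arbitrary $g\in C(\bar\O)$ and comparing with \eqref{measure} yields $\int_{\b_{\cal R}\R^{m\times n}}\hat\nu_x(\d s)=1$ for $\sigma$-a.e.\ $x$, and non-negativity of $\hat\nu_x$ follows by taking $\psi_0\ge 0$; this is property~4. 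For properties 2 and 3, subtract \eqref{basic} from \eqref{y-dm} to deduce that, for every $g\in C(\bar\O)$ and $\psi_0\in\mathcal R$,
\[
\int_{\bar\O}g(x)\int_{\R^{m\times n}}\psi_0(s)\hat\nu_x(\d s)\,\sigma(\d x)=\int_\O g(x)\int_{\R^{m\times n}}\psi_0(s)(1+|s|^p)\nu_x(\d s)\,\d x.
\]
Density of $\mathcal R$ among continuous functions on $\R^{m\times n}$ (via Stone-Weierstrass on $\b_{\cal R}\R^{m\times n}$) together with the Lebesgue decomposition of $\sigma$ disintegrate this identity to $\hat\nu_x|_{\R^{m\times n}}(\d s)\,d_\sigma(x)=(1+|s|^p)\nu_x(\d s)$ for a.e.\ $x$; hence $\sigma_{\hat\nu}\ll\d x$ (property~2), and integrating against $(1+|s|^p)^{-1}$ (using that $\nu_x$ is a probability measure) gives $d_\sigma(x)=\bigl(\int_{\R^{m\times n}}\hat\nu_x(\d s)/(1+|s|^p)\bigr)^{-1}$, which combined with $d_{\sigma_{\hat\nu}}=d_\sigma\cdot\int_{\R^{m\times n}}\hat\nu_x(\d s)$ yields the formula in property~3.

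For (ii)$\Rightarrow$(i), the task is to construct a generating sequence. Property~3 makes
\[
\nu_x(\d s):=[d_\sigma(x)]^{-1}(1+|s|^p)^{-1}\hat\nu_x|_{\R^{m\times n}}(\d s)
\]
a well-defined classical Young measure on $\R^{m\times n}$, which is realized by an $L^p$-bounded sequence $\{z_k\}$ via Schonbek's theorem and will reproduce the absolutely continuous part of $\sigma$. Independently, $\sigma_s$ together with the boundary profile $\hat\nu_x|_{\rem}$ is produced by concentrating rescaled bumps of the form $k^{n/p}\varphi_0(k(x-x_0))$ localized near points $x_0\in\operatorname{supp}\sigma_s$, whose amplitudes and unit directions are chosen from a finite atomic approximation of $\sigma_s$ and of the angular profile $\hat\nu_x|_{\rem}$; separability of $\mathcal R$ and weak-$\ast$ density of finitely supported measures reduce the general case to this atomic one. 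The desired sequence $\{y_k\}$ is then assembled from the two building blocks supported on disjoint small balls, and a diagonal subsequence verifies \eqref{basic} against a countable dense subset of $C(\bar\O)\times\mathcal R$.

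The principal obstacle is that the two building blocks must be glued without cross-contamination: the concentrating bumps run to infinity in value, so their contribution to $\psi(y_k)=\psi_0(y_k)(1+|y_k|^p)$ must fall entirely into the $\rem$-part of \eqref{basic} and not corrupt the finite-valued limit, while $\{z_k\}$ must remain $p$-equiintegrable on neighborhoods of $\operatorname{supp}\sigma_s$ so as not to spoil the prescribed $\sigma_s$. This is arranged by an appropriate ordering of scales (bump width $\ll$ spacing between concentration points $\ll$ oscillation scale of $\{z_k\}$) followed by iterated diagonalization, after which all required convergences hold along a single subsequence.
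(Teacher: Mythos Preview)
The paper does not prove this proposition at all: it is quoted verbatim from \cite{k-r-dm} (Kru\v{z}\'{\i}k--Roub\'{\i}\v{c}ek, \emph{Mathematica Bohemica} 1997) and stated without proof. There is therefore nothing in the present paper to compare your argument against; the actual proof lives in that reference.

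That said, your sketch is in the right spirit and broadly matches the strategy of the original source. A couple of points deserve care. In the direction (i)$\Rightarrow$(ii), your use of \eqref{y-dm} is fine, but the phrase ``density of $\mathcal R$ among continuous functions on $\R^{m\times n}$'' is not quite right: $\mathcal R$ is only dense in $C(\beta_{\mathcal R}\R^{m\times n})$, not in $C_b(\R^{m\times n})$. What you actually need is that $\mathcal R$ contains enough test functions to identify the measures $\hat\nu_x\llcorner\R^{m\times n}$, which follows because $\mathcal R$ separates points from closed sets (so in particular $C_0(\R^{m\times n})\subset\overline{\mathcal R}$ in the sup norm on compacts, which suffices). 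In the direction (ii)$\Rightarrow$(i), your construction of concentrating bumps ``whose amplitudes and unit directions are chosen'' tacitly assumes the remainder $\beta_{\mathcal R}\R^{m\times n}\setminus\R^{m\times n}$ has a sphere-like angular structure; for a general metrizable compactification this is not available, and one instead approximates a prescribed probability measure on the remainder by Dirac masses at points of the remainder, each of which is in turn realized as a limit of a sequence in $\R^{m\times n}$ escaping to infinity along that ``direction''. The diagonalization you describe then goes through, but the bookkeeping is a bit more abstract than your wording suggests.
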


\begin{remark}\label{finercompactification}
 Consider a metrizable compactification $\beta_{\cal R}\R^{m\times n}$ of $\R^{m\times n}$ and the corresponding
separable complete closed  ring ${\cal R}$ with its dense subset $\{\psi_k\}_{k\in\N}$.  We  take a bounded continuous  function $\psi:\R^{m\times n}\to\R$, $\psi\not\in{\cal R}$ and take a closure (in the Chebyshev norm) of all the products of elements from  $\{\psi\}\cup\{\psi_k\}_{k\in\N}$.  The corresponding ring is again separable and  the corresponding compactification is metrizable but strictly  finer than  $\beta_{\cal R}\R^{m\times n}$.
\end{remark}

The following result can be found in \cite{mkak} and its extension in \cite{kroemer-kruzik}. Here and in the sequel $d_\sigma$ denotes density of the absolutely continuous part of $\sigma$ with respect to the Lebesgue measure  $\mathcal{L}^n$.
\begin{theorem}\label{suff1}
Let  $\O\subset\R^n$ be a bounded   domain with the extension property in $W^{1,p}$,
$1<p<+\infty$ and $(\sigma,\hat\nu)\in\cdm$. Then then there is a bounded sequence
$\{u_k\}_{k\in\N}\subset W^{1,p}(\O;\R^m)$ such that $u_k=u_j$ on $\partial\O$ for any $j,k\in\N$ and  $\{\nabla
u_k\}_{k\in\N}$ generates $(\sigma,\hat\nu)$ if and only if the
following three conditions hold:
\be\label{firstmoment6} 
\exists u\in W^{1,p}(\O;\R^m):\mbox{  for a.a. $x\in\O$:  }  \nabla
u(x)=d_\sigma(x)\int_{\beta_{\cal R}\R^{m\times n}}\frac{s}{1+|s|^p}\hat\nu_x(\d
s)\ ,
\ee 
for almost all $x\in\O$ and for all  $\psi_0\in\R$ and $\psi(s):=(1+|s|^p)\psi_0(s)$,  the
\be\label{qc6} Q\psi(\nabla
u(x))\le d_\sigma(x)\int_{\beta_{\cal R}\R^{m\times
n}}\psi_0(s)\hat\nu_x(\d s)\ , \ee for $\sigma$-almost
all $x\in\bar\O$ and all $\psi_0\in\R$ with $Q\psi>-\infty$, where $\psi(s):=(1+|s|^p)\psi_0(s)$,
\be\label{rem6}
 0\le  \int_{\beta_{{\cal R}}\R^{m\times n}\setminus\R^{m\times n}}\psi_0(s)\hat\nu_x(\md s)\ .
\ee
\end{theorem}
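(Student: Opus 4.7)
The two implications require rather different techniques and I would treat them separately.

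\emph{Necessity ($\Rightarrow$).} Assume $\{\nabla u_k\}$ generates $(\sigma,\hat\nu)$ with a common trace on $\partial\O$, so that, after extracting a subsequence, $u_k\wto u$ in $W^{1,p}(\O;\R^m)$. For \reff{firstmoment6} I would test the DiPerna--Majda identity \reff{basic} against arbitrary $g\in C(\bar\O)$ with the coordinate maps $\psi(s)=s_{ij}$. Since the corresponding $\psi_0(s)=s_{ij}/(1+|s|^p)$ lies in $C_0(\R^{m\times n})$ for $p>1$, it extends by zero to the remainder $\rem$, so the right-hand side of \reff{basic} reduces to an integral against $d_\sigma\,\d x$; equating with the weak limit $\int_\O g\,(\nabla u)_{ij}\,\d x$ gives the pointwise formula. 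For \reff{qc6} I would appeal to weak lower semicontinuity of quasiconvex integrals of $p$-growth localized at Lebesgue points: for $\psi_0\in{\cal R}$ with $Q\psi>-\infty$ and shrinking balls $B\subset\O$,
$$\frac{1}{|B|}\int_{B} Q\psi(\nabla u)\,\d x\le \frac{1}{|B|}\liminf_{k\to\infty}\int_{B}\psi(\nabla u_k)\,\d x,$$
where the right-hand side is computed from \reff{basic}. Sending $B\to\{x\}$ yields the pointwise inequality almost everywhere. Finally, \reff{rem6} follows from the same lower-semicontinuity inequality on small balls centered at $\sigma_s$-typical points, after rescaling so that the absolutely continuous contribution becomes negligible; what survives on the right-hand side is an integral over $\rem$, which must therefore be non-negative.

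\emph{Sufficiency ($\Leftarrow$).} This is the substantial direction, and I would construct $\{u_k\}$ in three layers. First, decompose $\sigma=d_\sigma\,\d x+\sigma_s$ and split $\hat\nu_x$ correspondingly into an oscillatory part supported in $\R^{m\times n}$ and a concentration part supported in $\rem$. Second, on a small cube centered at almost every $x_0\in\O$, build an oscillatory gradient sequence with affine boundary data $\nabla u(x_0)\cdot x$ that realizes the classical Young-measure piece of $\hat\nu_{x_0}$; this is exactly the content of the Kinderlehrer--Pedregal characterization, whose applicability is guaranteed by the Jensen-type inequality \reff{qc6}. Third, choose a countable dense set $\{x_j\}$ in $\mathrm{supp}\,\sigma_s$ and superimpose rescaled concentrations of the form
$$x\longmapsto k^{n/p-1}w_{k,j}\bigl(k(x-x_j)\bigr),\qquad w_{k,j}\in W_0^{1,p}(B(0,1);\R^m),$$
whose generated remainder measure recovers $\hat\nu_{x_j}|_{\rem}$; the existence of admissible profiles $w_{k,j}$ is the content of \reff{rem6}. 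Finally, I would patch the three layers through a partition of unity and cut-offs, with supports pairwise disjoint and scales well-separated, and extract a diagonal subsequence testing \reff{basic} against countable dense families $\{g_i\}\subset C(\bar\O)$ and $\{\psi_{0,j}\}\subset{\cal R}$; the Dirichlet data are preserved because every perturbation is compactly supported away from $\partial\O$.

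\emph{Main obstacle.} The decisive step is the third layer: one must realize, at $\sigma$-a.e.\ $x$, the singular slice $\hat\nu_x|_{\rem}$ as the remainder part of a DiPerna--Majda measure generated by rescaled gradients of $W_0^{1,p}$-maps. I expect to handle this via a Hahn--Banach-type separation argument showing that \reff{rem6} is not only necessary but also sufficient: if no such family of profiles existed, duality against the cone of $\psi_0\in{\cal R}$ with $Q\psi>-\infty$ would produce a quasiconvex test function violating \reff{rem6}. The secondary difficulty is the compatibility of the oscillatory and concentrating layers with a single prescribed boundary trace; this forces a careful choice of scales, localization radii, and cut-offs so that neither the generated measure nor the Dirichlet data are spoiled in the assembly.
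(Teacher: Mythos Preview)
The paper does not contain a proof of this theorem; it is quoted from \cite{mkak} (with the extension to the boundary in \cite{kroemer-kruzik}), so there is no in-paper argument to compare against. Your outline is broadly in line with the strategy of those references: necessity via localization plus Kinderlehrer--Pedregal, sufficiency by superimposing an oscillatory layer built from the gradient-Young-measure characterization and a concentrating layer obtained by a Hahn--Banach separation over the cone $\{\psi_0\in{\cal R}:Q\psi>-\infty\}$.

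Two points where your plan is thinner than the actual proofs and would need work. First, in the necessity of \eqref{rem6} you restrict attention to ``$\sigma_s$-typical points'', but the condition must hold for $\sigma$-a.e.\ $x\in\bar\O$, including points in the support of the absolutely continuous part where $\hat\nu_x$ may still charge the remainder; more importantly, the hypothesis that all $u_k$ share the same trace is precisely what forces \eqref{rem6} also at boundary points (without it one only gets the weaker $p$-qscb condition of Theorem~\ref{kroemer}), and your argument does not yet explain where the fixed Dirichlet datum enters. Second, in the sufficiency direction the Hahn--Banach step you describe establishes that the concentration slice at a \emph{single} point can be generated by rescaled $W_0^{1,p}$-profiles; turning this into a globally defined sequence that simultaneously reproduces $\hat\nu_x|_{\rem}$ for $\sigma$-a.e.\ $x$ requires a measurable-selection/homogenization argument (and a careful diagonal extraction) that is the technical heart of \cite{mkak} and is not yet visible in your plan.
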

\begin{remark}
Inequality \eqref{qc6} can be 
%\footnote{AK: equivalently -erased. I think it is not true -- SK: Due to \eqref{rem6}, $\beta_\mathcal{R}$ in  \eqref{qc6} is redundant and could be removed (use finite part only). And then equivalence does hold.} 
written in terms of $\nu=\{\nu_x\}$, the Young measure generated by $\{u_k\}$, as
follows \cite{k-p1}: There exists a zero-measure set $\omega\subset\O$  such that for every $x\in\O\setminus\omega$
\begin{align}
 \psi(\nabla
u(x))\le \int_{\R^{m\times
n}}\psi(s)\nu_x(\d s)\ ,
\end{align}
for all $\psi:\R^{m\times n}\to\R$ quasiconvex and such that $|\psi|\le C(1+|\cdot|^p)$ for some $C>0$.
\end{remark}
Theorem~\ref{suff1} can be used to obtain weak lower semicontinuity results along sequences with prescribed boundary data \cite{mkak}. If we do not control boundary conditions the situation is much more subtle.  To the best of our knowledge, the first results in this direction are due to Meyers \cite{meyers} who also deals with higher-order variational problems. However, his condition is stated in terms of sequences. 
A refinement was proved in \cite[Thm.~1.6]{kroemer}, showing that even near the boundary, the necessary and sufficient conditions 
for weak lower semicontinuity in terms of 
the integrand can be expressed in terms of localized test functions, similar to quasiconvexity:
%

%\begin{thm}\label{thm:GDMchar}
%Assume that \eqref{H1}--\eqref{H3} hold,
%and let $(\sigma,\hat\nu)\in \DM$. 
%Then $(\sigma,\hat\nu) \in \GDM$
%%, i.e., $(\sigma,\hat\nu)$ is generated by a sequence of gradients, 
%%there is a bounded sequence $(u_k)\subset W^{1,p}(\O;\RR^M)$ such that $(\nabla u_k)$ generates $(\sigma,\hat\nu)$ 
%if and only if the following four conditions are satisfied simultaneously:
%\begin{enumerate}
%
%\item[(i)] There exists $u\in W^{1,p}(\O;\RR^M)$ such that for a.e.~$x\in\O$,
%\begin{equation*}%\label{GDMc-1}
	%\nabla u(x)=d_\sigma(x)\int_{\com} \frac{s}{1+\abs{s}^p}\hat{\nu}_x(ds);
%\end{equation*}
%
%\item[(ii)] With $u$ from (i), for a.e.~$x\in\O$ and every $v\in \ups$,
%\begin{equation*}%\label{GDMc-2}
	%Qv(\nabla u(x))\leq d_\sigma(x)\int_{\com} \frac{v(s)}{1+\abs{s}^p}\hat{\nu}_x(ds);
%\end{equation*}
%
%\item[(iii)] For $\sigma$-a.e.~$x\in \O$ and every $v\in \ups$ such that $Q v>-\infty$,
%\begin{equation*} %\label{GDMc-3i}
	%0\leq \int_{\com\setminus \RR^{M\times N}} \frac{v(s)}{1+\abs{s}^p}\hat{\nu}_x(ds);
%\end{equation*} 
%
%\item[(iv)] For $\sigma$-a.e.~$x\in \partial\O$ and every $v\in \ups$ which is $p$-qscb at $x$,
%\begin{equation*} %\label{GDMc-3b}
	%0\leq \int_{\com\setminus \RR^{M\times N}} \frac{v(s)}{1+\abs{s}^p}\hat{\nu}_x(ds).
%\end{equation*} 
%
%\end{enumerate}
%Here, $d_\sigma$ denotes the density of the absolutely continuous part of $\sigma$ with respect to the Lebesgue measure,
%which is explicitly given by \eqref{dsigma}.
%\end{thm}

\begin{theorem}\label{kroemer}
Let $1<p<\infty$,  $\O\subset \R^n$ be a bounded domain with  the $C^1$-boundary. Let  $\tilde h:\bar\O\times\R^{m\times n}\to\R$ be continuous and such that $ \tilde h(\cdot,s)/(1+|s|^p)$ is bounded and continuous in $\bar\O$, uniformly in $s$.  Then $J(u):=\int_\O \tilde h(x,\nabla u(x))\,\md x$  is weakly lower semicontinuous in $W^{1,p}(\O;\R^m)$ if and only if the following two conditions hold simultaneously:
\vspace*{-1ex}
\begin{alignat}{2}
& \text{(i)} &&
\text{$\tilde h(x,\cdot)$ is quasiconvex for all $x\in\O$;}\nonumber\\
& (ii)~ && \text{for every $x_0\in\partial\O$ and for every $\eps>0$, there exists $C_\eps\geq 0$ such that}\nonumber\\
&&& \label{pqslb}
\int_{D_\varrho} \tilde h(x_0,\nabla \varphi(x))\,\d x\geq -\eps \int_{D_\varrho} |\nabla \varphi(x)|^p\,\d x-C_\eps~~
	\text{for every $\varphi \in C^\infty_c(B(0,1);\R^m)$}.
%&\text{for every $\varphi \in W^{1,p}(D_\nu;\R^M)$ with $\varphi=0$ on $\Gamma_\nu$}. %in the sense of trace
\end{alignat}
Here, $D_\varrho:=\{x\in B(0,1);\ x\cdot\varrho<0\}$ where $\varrho$ denotes the outer unit normal to $\partial\O$ at $x_0$.
\end{theorem}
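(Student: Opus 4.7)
\emph{Necessity.} To obtain (i), I would run the classical blow-up at interior points: fix $x_0\in\Omega$, $s_0\in\R^{m\times n}$, and $\varphi\in C_c^\infty(B(0,1);\R^m)$, and test weak lower semicontinuity of $J$ against $u(x):=s_0\cdot x$ and $u_k(x):=s_0\cdot x+rk^{-1}\varphi(kr^{-1}(x-x_0))$ for $B(x_0,r)\subset\Omega$. Rescaling the local integral, using the uniform continuity of $\tilde h(\cdot,s)/(1+|s|^p)$ in $x$, and applying the Lebesgue differentiation theorem as $r\to 0^+$ yield quasiconvexity of $\tilde h(x_0,\cdot)$ at every $x_0\in\Omega$. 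For (ii) I would fix $x_0\in\partial\Omega$ with outer normal $\varrho$, flatten $\partial\Omega$ near $x_0$ by a $C^1$-chart $\Phi$, and for $\varphi\in C_c^\infty(B(0,1);\R^m)$ take the concentrating sequence $u_k(x):=k^{(n-p)/p}\varphi(k\Phi(x))$ (extended by $0$), so $u_k\rightharpoonup 0$ in $W^{1,p}(\Omega;\R^m)$. A change of variables together with the uniform $x$-continuity of $\psi_0(x,s):=\tilde h(x,s)/(1+|s|^p)$ reduces $\liminf_k J(u_k)\ge J(0)$ to a lower bound on $\int_{D_\varrho}\tilde h(x_0,\nabla\varphi)\,\d y$; dilating $\varphi\mapsto\lambda\varphi$, passing $\lambda\to\infty$, and absorbing $O(1)$ errors produces precisely the $-\eps\int|\nabla\varphi|^p-C_\eps$ slack of (ii).

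\emph{Sufficiency: DiPerna--Majda decomposition.} Let $u_k\rightharpoonup u$ in $W^{1,p}(\Omega;\R^m)$. Up to a subsequence, $\{\nabla u_k\}$ generates a pair $(\sigma,\hat\nu)\in\cdm$ for a separable compactification ring $\mathcal R$ rich enough to contain every $\psi_0(x,\cdot)$, $x\in\bar\Omega$ (such $\mathcal R$ exists by separability, cf.\ Remark~\ref{finercompactification}). Approximating $\tilde h$ in sup-norm by finite sums $\sum g_i(x)\psi_i(s)$ and passing to the limit in \eqref{basic}, \eqref{y-dm} with $g=g_i,\,\psi=\psi_i$ gives
\begin{align*}
\lim_k J(u_k) &= \int_\Omega\int_{\R^{m\times n}}\tilde h(x,s)\,\nu_x(\d s)\,\d x\\
&\quad +\int_{\bar\Omega}\int_{\beta_{\cal R}\R^{m\times n}\setminus\R^{m\times n}}\psi_0(x,s)\,\hat\nu_x(\d s)\,\sigma(\d x),
\end{align*}
with $\{\nu_x\}$ the underlying classical Young measure. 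Decomposing $\sigma=d_\sigma\mathcal{L}^n+\sigma_s\measurerestr\Omega+\sigma\measurerestr\partial\Omega$ reduces the proof to three separate lower bounds.

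\emph{Estimating the three pieces.} On $d_\sigma\mathcal{L}^n$ (interior oscillation), (i) combined with the Kinderlehrer--Pedregal inequality $\int_{\R^{m\times n}}\tilde h(x,s)\nu_x(\d s)\ge\tilde h(x,\nabla u(x))$ a.e.\ furnishes the desired $J(u)$ lower bound. On $\sigma_s\measurerestr\Omega$ (interior concentration), localization at a Besicovitch point $x_0\in\Omega$ together with quasiconvexity of $\tilde h(x_0,\cdot)$ --- via \eqref{qc6} and \eqref{rem6} of Theorem~\ref{suff1} --- forces $\int\psi_0(x_0,s)\hat\nu_{x_0}(\d s)\ge 0$ on the recession sphere $\beta_{\cal R}\R^{m\times n}\setminus\R^{m\times n}$. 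The main obstacle is the boundary piece $\sigma\measurerestr\partial\Omega$: at each Lebesgue point $x_0\in\partial\Omega$ of this measure I would flatten $\partial\Omega$, approximate the local concentration of $\{\nabla u_k-\nabla u\}$ by test functions $\varphi_j\in C_c^\infty(B(0,1);\R^m)$ whose rescaled gradients carry the footprint of $\hat\nu_{x_0}$ on the sphere at infinity (cutoff-and-rescale on shrinking half-balls $D_\varrho\cap B(0,\rho_j)$, exploiting that such $\varphi_j$ need \emph{not} vanish on the flat face $\partial D_\varrho\cap B(0,1)$ --- this is precisely what distinguishes free-boundary concentration from the prescribed-boundary setting covered by Theorem~\ref{suff1}), apply (ii) to each $\varphi_j$ with $\eps_j\downarrow 0$, and use the uniform $L^p$-bound on $\{\nabla u_k\}$ to absorb the $\eps_j\int|\nabla\varphi_j|^p$ slack while $C_{\eps_j}$ becomes negligible upon localization. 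Engineering this approximation so that $\{\varphi_j\}$ respects both the compact-support constraint in $B(0,1)$ and the trace behavior of $u_k$ at $\partial\Omega$ --- only a $C^1$ manifold --- is the delicate step.
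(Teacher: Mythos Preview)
The paper does not actually prove Theorem~\ref{kroemer}. This result is quoted verbatim from \cite[Thm.~1.6]{kroemer} (Kr\"omer, \emph{Adv.\ Calc.\ Var.}~2010) as a known background fact; the paper only uses it, in particular as the model for the notion of $p$-quasisubcritical growth from below and in the proof of Theorem~\ref{thm:p-greater-n}. There is therefore no ``paper's own proof'' against which to compare your proposal.

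That said, your sketch is broadly in the spirit of the original argument in \cite{kroemer}: interior quasiconvexity via blow-up, a concentrating boundary sequence for the necessity of (ii), and for sufficiency a decomposition of the limit into an oscillation part (handled by Kinderlehrer--Pedregal) and a concentration part localized separately in $\Omega$ and on $\partial\Omega$. The one place where your outline is genuinely thin is the boundary-concentration step: you correctly identify that one must produce test functions $\varphi_j\in C_c^\infty(B(0,1);\R^m)$ that carry the concentration footprint of $\hat\nu_{x_0}$ while remaining free on the flat face of $D_\varrho$, and then feed them into \eqref{pqslb}. In the actual proof this is done not by an ad hoc cutoff-and-rescale on $u_k-u$ (which runs into trouble controlling the trace on the curved part of the localized boundary under only $C^1$ regularity), but via a decomposition lemma that splits $\{\nabla u_k\}$ into an equi-integrable part and a concentrating remainder supported in shrinking sets, after which the boundary piece can be tested directly against \eqref{pqslb} with $\eps\to 0$. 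Your last paragraph gestures at this but stops short of the mechanism; as written it is an outline rather than a proof.
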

If $\tilde h$ satisfies (ii) we say that it has $p$-quasisubcritical growth from below ($p$-qscb) at $x_0$.

\section{Anisotropic parametrized measures generated by pairs of sequences}
This section is devoted to a new tools which might be seen as a multiscale oscillation/concentration measures. It is a generalization of the approach introduced in \cite{pedregal-mult} where only oscillations were taken into account.  We also wish to mention that if $\{u_k\}_{k\in\N}$ is bounded in $W^{1,p}(\O;\R^m)$ for $1<p<\infty$ then (at least for a nonrelabeled subsequence) the Young measure generated by the pair $\{(u_k,\nabla u_k)\}$ is $\xi_x(\md(r,s))=\delta_{u(x)}(\md r)\nu_x(\md s)$ for almost all $x\in\O$.
Here $u$ is the weak limit of $\{u_k\}_{k\in\N}$ in $W^{1,p}(\O;\R^m)$ and $\{\nu_x\}_{x\in\O}$ is the Young measure generated by $\{\nabla u_k\}$. We refer to \cite{pedregal} for the proof of this statement. If we are interested also in concentrations of $\{|\nabla u_k|^p\}$ and in their interactions with $\{u_k\}$ the situation is more involved.

As before, let $\mathcal{R}$ be a complete separable  ring  of
continuous bounded functions $\R^{m\times n}\to\R$. %, containing at least $C_0(\R^{m\times n})$ and all constant functions. 
Similarly, we take a complete separable ring $\mathcal{U}$ of continuous bounded real-valued  functions on $\R^m$, 
%containing at least $C_0(\R^m)$ and all constant functions, 
and denote the corresponding metrizable compactification of $\R^m$ by $\b_{\mathcal{U}}\R^m$. 
We will consider the ring $C(\bar\O)\otimes \mathcal{U}\otimes \mathcal{R}$, the  subset
 of bounded continuous functions on $\O\times \R^{m}\times \R^{m\times n}$ spanned by $\{ (x,s,r)\mapsto g(x)f_0(r)\psi_0(s): g\in C(\bar\O),~~f_0\in \mathcal{U},~~ \psi_0\in \mathcal{R}\}$. Also notice that $\b_\mathcal{U}\R^{m}\times \b_\mathcal{R}\R^{m\times n} =\b_{\mathcal{U}\otimes \mathcal{R}}(\R^{m}\times \R^{m\times n})$.  
Finally, notice that the linear hull of $\{g\otimes f_0\otimes \psi_0:\, g\in C(\bar\O)\, , f_0\in C(\b_\mathcal{U})\, ,\psi_0\in C(\b_\mathcal{R}\R^{m\times n})\}$ is dense in $C(\bar\O\times\b_\mathcal{U}\R^{m}\times \b_\mathcal{R}\R^{m\times n})$ due to the Stone-Weierstrass theorem.
Here, $[g\otimes f_0\otimes \psi_0](x,r,s):=g(x)f_0(r)\psi_0(s)$ for all $x\in\bar\O$, $r\in\R^m$, and all $s\in\R^{m\times n}$.

\begin{remark}
There always exists a separable ring 
into which a given continuous bounded function $f_0$ belongs. Indeed, consider a ring $\mathcal{U}_0$ of continuous functions which possess limits  if the norm of their argument tends to infinity. This ring to the one-point compactification of $\R^m$. If $f_0$ does not belong to $\mathcal{U}_0$ we construct a larger ring from $f_0$ and $\mathcal{U}$ by taking the closure (in the maximum norm) of all products of $\{f_0\}\cup\mathcal{U}$.  
\end{remark}

\subsection{Representation of limits using parametrized measures}

The following statement is rather standard generalization of the DiPerna-Majda Theorem to the anisotropic case. It can be obtained using a special case of the representation theorem in \cite{ak2}.\footnote{in \cite{ak2} it is assumed that the compactification of the entire space $\R^m\times\R^{m\times n}$ is a subset in $\R^N$ for some $N\in\N$. This however is not required for the proof in \cite{ak2} which only uses separability of the compactification.}
%which deals with the case of one ``brick'' in the decomposition of the entire space and considers ``density function'' $g(r,s) = 1+|r|^q+|s|^p$.
\begin{theorem}\label{thm0}
Let  $1\le q\le +\infty$, $1\le p<+\infty$ and
\[
\YY^{q,p}(\O,\mathcal{U},\mathcal{R})=\{ h_0(r,s)(1+|r|^q+|s|^p) : h_0\in C(\bar\O\times \b_\mathcal{U}\R^{m}\times \b_\mathcal{R}\R^{m\times n})\} .
\]
Moreover, let  $\{u_k\}_{k\in\N}$ be  bounded sequence in  $L^q(\O;\R^m)$ and $\{w_k\}$ a bounded sequence in $L^p(\O;\R^{m\times n})$. Then there is a  subsequence $\{(u_k,w_k)\}$ (denoted by the same indeces), a measure  $\hat{\sigma}(dx)$
%with the following properties:
such that
%\begin{description}
%\item[i)] 
$$(1+|u_k|^q + |w_k|^p)dx\stackrel{*}{\rightharpoonup} \hat{\sigma},$$ 
%weakly $*$ converges to some measure denoted by $\hat{\sigma}(dx)$;f
%\item[ii)] $\{(u_k,w_k)\}$ generates classical Young measures $\{\xi_x\}_{x\in\O} \in {\cal Y}^\infty
%(\O\times\R^m\times\R^{m\times n})$, i.e. for every $f\in C_0(\R^m\times\R^{m\times n})$ and every $g\in L^\infty (\O)$ we have 
%\[
%\lim_{k\to\infty}\int_\O g(x) f(u_k(x),w_k(x))dx = \int_\O \int_{ \R^m\times\R^{m\times n} } f(r,s)\xi_x(dr,ds) g(x)dx;
%\]
%\item[iii)] 
and a family of probability measures $\{ \hat{\gamma}_x\}_{x\in\bar{\Omega}}\in L^\infty_{{\rm w}*}(\bar{\Omega}, \mathcal{M}(\b_{\mathcal{U}}\R^m \times \b_{\mathcal{R}}\R^{m\times n});\hat{\sigma})$ such that
for any $h\in \YY^{q,p}(\O,\mathcal{U},\mathcal{R})$ and any $g\in C(\bar{\O})$ we have
\begin{eqnarray*}
\lim_{k\to\infty} \int_\O g(x)h_0(u_k(x),w_k(x))(1+|u_k(x)|^q+|w_k(x)|^p) dx \rightarrow\\
\int_{\bar{\O}} g(x)\int_{\b_\mathcal{U}\R^{m}\times \b_\mathcal{R}\R^{m\times n}} h_0(r,s)\hat{\gamma}_x(dr,ds) \hat{\sigma}(\d x).
\end{eqnarray*}
%\item[iv)] for any $h\in \YY^{q,p}(\O,\mathcal{U},\mathcal{R})$ and any $g\in C(\bar{\O})$ we have
%\begin{eqnarray*}
%\int_{\bar{\O}} g(x)\int_{\b_\mathcal{U}\R^{m}\times \b_\mathcal{R}\R^{m\times n}} h_0(r,s)%\hat{\gamma}_x(dr,ds) \hat{\sigma}(\d x) =\\ \int_{{\O}} g(x) \int_{ \R^m\times\R^{m\times n} }
%h(r,s) \xi_x(dr,ds) dx + \int_{\bar{\O}} g(x)\int_{Rem    } h_0(r,s)\hat{\gamma}_x(dr,ds) \hat{\sigma}(\d x)
%\end{eqnarray*}
%where $Rem = \b_\mathcal{U}\R^{m}\times \b_\mathcal{R}\R^{m\times n}\setminus \R^m\times \R^{m\times n}$ is the reminder.
%\end{description}
\end{theorem}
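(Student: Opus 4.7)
The plan is to follow the classical extraction-plus-disintegration strategy that yields the DiPerna--Majda theorem, but on the enlarged compactification $\bar\O\times \beta_\mathcal{U}\R^{m}\times \beta_\mathcal{R}\R^{m\times n}$. For each $k$, define a linear functional on the Banach space $X:=C(\bar\O\times \beta_\mathcal{U}\R^{m}\times \beta_\mathcal{R}\R^{m\times n})$ by
$$T_k(h_0):=\int_\O h_0\bigl(x,u_k(x),w_k(x)\bigr)\bigl(1+|u_k(x)|^q+|w_k(x)|^p\bigr)\,\md x,$$
where $u_k(x)\in\R^m$ and $w_k(x)\in\R^{m\times n}$ are identified with their images under the canonical (homeomorphic and dense) embeddings into the two compactifications. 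The $L^q$-bound on $\{u_k\}$ and the $L^p$-bound on $\{w_k\}$ yield $\|T_k\|_{X^*}\le C$ with $C$ independent of $k$.

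Since by assumption both compactifications are metrizable, the product $\bar\O\times \beta_\mathcal{U}\R^{m}\times \beta_\mathcal{R}\R^{m\times n}$ is compact metric and $X$ is therefore separable. Banach--Alaoglu then gives a subsequence (not relabeled) with $T_k\stackrel{*}{\wto}T$ in $X^*$, and the Riesz representation theorem identifies $T$ with a finite positive Radon measure $\mu$ on the product (positivity is inherited from $T_k(h_0)\geq 0$ whenever $h_0\geq 0$). Let $\hat\sigma$ denote the push-forward of $\mu$ under the projection onto $\bar\O$. Testing with $h_0(x,r,s)=g(x)$ already yields the first assertion $(1+|u_k|^q+|w_k|^p)\,\md x\stackrel{*}{\wto}\hat\sigma$ in $\mathcal{M}(\bar\O)$. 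Applying the disintegration theorem to the finite Radon measure $\mu$ with prescribed $\bar\O$-marginal $\hat\sigma$ produces a weakly-$*$ $\hat\sigma$-measurable family $\{\hat\gamma_x\}_{x\in\bar\O}\subset\mathcal{M}^+(\beta_\mathcal{U}\R^m\times \beta_\mathcal{R}\R^{m\times n})$ with $\hat\gamma_x$ a probability measure for $\hat\sigma$-a.e.~$x$, and
$$\int h_0\,\md\mu\ =\ \int_{\bar\O}\int_{\beta_\mathcal{U}\R^m\times \beta_\mathcal{R}\R^{m\times n}} h_0(x,r,s)\,\hat\gamma_x(\md r,\md s)\,\hat\sigma(\md x).$$

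For any $h=h_0(r,s)(1+|r|^q+|s|^p)\in\YY^{q,p}(\O,\mathcal{U},\mathcal{R})$ and any $g\in C(\bar\O)$, the function $g\otimes h_0$ lies in $X$, so combining the previous steps gives the formula claimed in the theorem. The only points requiring genuine verification (rather than invocation) are the separability of $X$, which is supplied by Stone--Weierstrass together with the assumed metrizability of the two compactifications, exactly as noted in the excerpt immediately before the statement, and the probability normalization of $\hat\gamma_x$, which follows from $\hat\sigma$ being by construction the first marginal of $\mu$ (equivalently, comparing both representations of $T(1)=\hat\sigma(\bar\O)$). The main potential obstacle is therefore purely bookkeeping — tracking which object lives on $\bar\O$, which on the product — since the whole argument is structurally identical to the classical DiPerna--Majda proof with one extra variable carried along.
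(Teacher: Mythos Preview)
Your proposal is correct and follows exactly the standard DiPerna--Majda extraction-plus-disintegration scheme that the paper invokes (without detailed proof) by citing the representation theorem in \cite{ak2}. The same strategy---phrased via Radon--Nikod\'ym and the duality $L^1(\bar\O,\sigma;C(\cdot))^*\cong L^\infty_{\rm w*}(\bar\O,\sigma;\mathcal{M}(\cdot))$ rather than abstract disintegration---is spelled out in the paper's own proof of the closely related Theorem~\ref{thm1}.
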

\begin{remark} \label{rem:catch}
In a sense, the pair $(\hat{\sigma},\hat{\gamma})$ is an anisotropic $(q,p)$ DiPerna-Majda measure generated by the sequence $\{ (u_k,w_k)\}$,
generalizing the isotropic case $p=q$.
However, while this approach is a rather intuitive generalization of standard DiPerna-Majda measures, it has a drawback: Several extremely simple and often prototypical choices for the integrands which we would like to use in applications are not admissible.
For instance, $h(x,r,s):=|s|^p$ \emph{never} is an element of $\YY^{q,p}(\O,\mathcal{U},\mathcal{R})$, because 
the limit of $h_0(x,r,s):=|s|^p(1+ |r|^q+|s|^p)^{-1}$ as $|(r,s)|\to \infty$ does not exist: we get $1$ as $|s|\to \infty$ for fixed $r$, and $0$ as $|s|\to \infty$ for fixed $r$. Hence, this function $h_0$ does not have a continuous extension to the compactification $\beta_{\mathcal{U}}\times \beta_{\mathcal{R}}$ of $\R^m\times \R^{m\times n}$. Similarly, $h(x,r,s):=|r|^q$ is not admissible, either. Note that this problem is completely independent of the choice of compactifications.
\end{remark}
In view of the issue pointed out in Remark~\ref{rem:catch}, we will not use Theorem~\ref{thm0} and its class of anisotropic DiPerna-Majda measures below. Instead, our next statement provides 
an alternative approach 
which in particular does allow integrands of the form $h(x,r,s):=|s|^p$.
\begin{theorem}\label{thm1}
Let  $1\le q\le +\infty$ and $1\le p<+\infty$. Let  $\{u_k\}_{k\in\N}$ be  bounded sequence in  $L^q(\O;\R^m)$ and $\{w_k\}$ a bounded sequence in $L^p(\O;\R^{m\times n})$. Then there is a (non-relabeled) subsequence $\{(u_k,w_k)\}$,  a DiPerna-Majda measure
$(\sigma,\hat\nu)\in\cdm$ and $\hat\mu\in\mathcal{Y}(\bar\O\times \b_\mathcal{R}\R^{m\times n};\b_\mathcal{U}\R^m)$, such that for every $f_0\in\mathcal{U}$, every $\psi_0\in\mathcal{R}$ and every  $g\in C(\bar\O)$ 
\begin{align}\label{generate}
\begin{aligned}
&\lim_{k\to\infty}\int_\O g(x) f_0(u_k(x))\psi(w_k(x))\,\md x\\
&\qquad=\int_{\bar\O}\int_{\b_\mathcal{R}\R^{m\times n}}\int_{\b_\mathcal{U}\R^m}g(x)f_0 (r)\psi_0(s)\hat\mu_{s,x}(\md r)\hat\nu_x(\md s)\sigma(\md x)\ ,
\end{aligned}
\end{align}
where $\psi(s):=\psi_0(s)(1+|s|^p)$. Moreover, measure $(\sigma,\hat{\nu})$ is generated by 
$\{ w_k\}$.
\end{theorem}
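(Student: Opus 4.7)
The strategy is to lift the pair $(u_k,w_k)$ to positive measures on the compact metrizable space $K:=\bar\O\times\b_\mathcal{R}\R^{m\times n}\times\b_\mathcal{U}\R^m$, extract a weak$^*$ limit, and disintegrate it against the marginal provided by the classical DiPerna--Majda theorem applied to $\{w_k\}$ alone.

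First, by the standard DiPerna--Majda theorem (identity \eqref{basic}) applied only to the bounded sequence $\{w_k\}\subset L^p(\O;\R^{m\times n})$, I extract a subsequence so that $\{w_k\}$ generates a measure $(\sigma,\hat\nu)\in\cdm$. This already secures the final assertion of the theorem and fixes the targets $\sigma$ and $\hat\nu$. Next, I define positive Radon measures $\lambda_k\in\rca(K)$ by
\[
\int_K h\,\md\lambda_k \,:=\, \int_\O h(x,w_k(x),u_k(x))\,(1+|w_k(x)|^p)\,\md x\quad\text{for } h\in C(K),
\]
where $w_k(x)$ and $u_k(x)$ are identified with their images under the dense homeomorphic embeddings $\R^{m\times n}\hookrightarrow\b_\mathcal{R}\R^{m\times n}$ and $\R^m\hookrightarrow\b_\mathcal{U}\R^m$. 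Since $\O$ is bounded and $\{w_k\}$ is bounded in $L^p$, the total masses $\lambda_k(K)=\int_\O(1+|w_k|^p)\,\md x$ are uniformly bounded. Because $K$ is compact and metrizable, $C(K)$ is separable, and Banach--Alaoglu produces a further (non-relabelled) subsequence with $\lambda_k\stackrel{*}{\wto}\lambda$ in $\rca(K)$.

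The next step is to identify the marginal of $\lambda$ on $\bar\O\times\b_\mathcal{R}\R^{m\times n}$. Evaluating $\lambda_k\stackrel{*}{\wto}\lambda$ on test functions of the form $h(x,s,r)=g(x)\psi_0(s)$ with $g\in C(\bar\O)$ and $\psi_0\in\mathcal{R}$ reduces the limit, via the definition of $\lambda_k$, to the integral $\int_\O g(x)\psi_0(w_k(x))(1+|w_k(x)|^p)\,\md x$, which by the generation property of $\{w_k\}$ converges to $\int_{\bar\O}\int_{\b_\mathcal{R}\R^{m\times n}}g(x)\psi_0(s)\hat\nu_x(\md s)\sigma(\md x)$. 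Since products $g\otimes\psi_0$ are dense in $C(\bar\O\times\b_\mathcal{R}\R^{m\times n})$ by Stone--Weierstrass, the push-forward of $\lambda$ under the projection onto the first two factors equals $\hat\nu_x(\md s)\sigma(\md x)$. The standard disintegration theorem on compact metric (hence Polish) spaces then yields a weakly$^*$ measurable family of probability measures $\{\hat\mu_{s,x}\}$ on $\b_\mathcal{U}\R^m$ such that
\[
\lambda(\md x,\md s,\md r)=\hat\mu_{s,x}(\md r)\,\hat\nu_x(\md s)\,\sigma(\md x).
\]
Inserting $h(x,s,r)=g(x)\psi_0(s)f_0(r)$ into $\lambda_k\stackrel{*}{\wto}\lambda$ delivers exactly \eqref{generate}.

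The main technical care lies in the bookkeeping across the three compactifications. One must check that $(x,s,r)\mapsto g(x)\psi_0(s)f_0(r)$ extends continuously to $K$ (which it does precisely because $\mathcal{R}$ and $\mathcal{U}$ are defined as the rings generating these compactifications, so $\psi_0$ and $f_0$ admit unique continuous extensions), and that the family $\{\hat\mu_{s,x}\}$ is weakly$^*$ measurable in $(s,x)$ with respect to $\hat\nu_x(\md s)\sigma(\md x)$, so that $\hat\mu\in\mathcal{Y}(\bar\O\times\b_\mathcal{R}\R^{m\times n};\b_\mathcal{U}\R^m)$. The normalization $\hat\mu_{s,x}(\b_\mathcal{U}\R^m)=1$ for $(\hat\nu_x\otimes\sigma)$-a.e.\ $(s,x)$ is then automatic from comparing the total mass of $\lambda$ on measurable rectangles with the identified marginal.
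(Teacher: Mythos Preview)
Your proof is correct and follows essentially the same route as the paper: extract a limit measure on the triple compactification, then disintegrate against the marginal $\hat\nu_x(\md s)\sigma(\md x)$. The paper orders the steps slightly differently---it obtains $\Lambda$ first, disintegrates in two stages (first over $x$ via Radon--Nikod\'{y}m and the duality $L^1(\bar\O,\sigma;C(\b_\mathcal{U}\R^m\times\b_\mathcal{R}\R^{m\times n}))^*\cong L^\infty_{\rm w}(\bar\O,\sigma;\rca(\ldots))$ to get $\{\lambda_x\}$, then over $s$ via the slicing-measure decomposition), and only afterwards identifies $(\sigma,\hat\nu)$ by testing with $f_0\equiv 1$---whereas you fix $(\sigma,\hat\nu)$ up front and perform a single disintegration over $(x,s)$; but this is a cosmetic reorganization of the same argument.
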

{\bf Proof.}
Due to separability of $\mathcal{U}$, $\mathcal{R}$ and of $C(\bar\O)$ there is a  (non-relabeled) subsequence of $\{(u_k,w_k)\}$ such that for all $[g\otimes f_0\otimes \psi_0] \in C(\bar\O)\times C(\beta_\mathcal{U}\R^m)\times C(\beta_\mathcal{R}\R^{m\times n})$  
and $\psi(s):=\psi_0(s)(1+|s|^p)$
\begin{align}\label{limit1}
\lim_{k\to\infty}\int_\O g(x) f_0(u_k(x))\psi(w_k(x))\,\md x=\la\Lambda, g\otimes f_0\otimes \psi_0\ra\ ,
\end{align}
for some $\Lambda\in\mathcal{M}(\bar\O\times \b_\mathcal{U}\R^m\times\b_\mathcal{R}\R^{m\times n})$.

We further define $\hat T_\Lambda:\mathcal{U}\times \mathcal{R}\to C(\bar\O)^*=\rca(\bar\O)$  by   $\la \hat T_\Lambda(f_0,\psi_0), g\ra:=\la\Lambda,g\otimes f_0\otimes \psi_0\ra$.
Let $\sigma\in\rca(\bar\O)$ be the weak* limit of $\{1+|w_k|^p)\}$. 
Then we see that due to \eqref{limit1}
\begin{align} \label{Tbound}
|\la \hat T_\Lambda(f_0,\psi_0), g\ra|= |\la\Lambda,g\otimes f_0\otimes \psi_0\ra|\le \|f_0\|_{C(\R^m)} \|\psi_0\|_{C(\R^{m\times n})}\int_{\bar\O} g(x)\,\sigma(\md x)\ .
\end{align}
This means that $\hat T_\Lambda(f_0,\psi_0)$ is absolutely continuous with respect to $\sigma$ and by the Radon-Nikod\'{y}m theorem 
there is $T_\Lambda:\mathcal{U}\times\mathcal{R}\to L^1(\bar\O;\sigma)$ such that for any Borel subset $\omega\subset\bar\O$  we get 
$\hat T_\Lambda(f_0,\psi_0)(\omega)=\int_\omega T_\Lambda(f_0,\psi_0)(x)\sigma(\md x)$. Consequently, 
the right-hand side of \eqref{limit1} can be written as 
$\int_{\bar\O} T_\Lambda(f_0,\psi_0)(x)g(x)\sigma(\md x)$. 

As $\mathcal{U}\times\mathcal{R}$ is separable, $\b_\mathcal{U}\R^m\times\b_\mathcal{R}\R^{m\times n}$ is metrizable and separable (with $\R^m\times\R^{m\times n}$ a dense subset) and $\sigma$ is a regular measure, the linear span of $C(\bar\O)\otimes C(\b_\mathcal{U}\R^m) \otimes C(\b_\mathcal{R}\R^{m\times n})$ is dense in  $L^1(\bar\O,\sigma;C(\b_\mathcal{U}\R^m\times\b_\mathcal{R}\R^{m\times n}))$ \cite[Thm.~1.5.25]{warga}.  Because of this and \eqref{Tbound}, 
$\Lambda$ can be continuously extended to a continuous linear functional on $L^1(\bar\O,\sigma;C(\b_\mathcal{U}\R^m\times\b_\mathcal{R}\R^{m\times n}))$. However, 
the dual of this space is isometrically isomorphic to $L^{\infty}_{\rm w}(\bar{\O},\s;\rca(\b_\mathcal{U}\R^m\times \b_{\cal R} \R^{m\times n}))$.
Arguing as in \cite[p.~133]{r} we get that there is a family $\lambda:=\{\lambda_x\}_{x\in\bar\O}$ of probability measures on $ \b_\mathcal{U}\R^m\times \b_{\cal R} \R^{m\times n}$ which is $\sigma$-weak* measurable,  for any $z\in C(\b_\mathcal{U}\R^m\times\b_{\cal R}\R^{m\times n})$, the mapping
$\bar\O\to\R:x\mapsto\int_{\b_\mathcal{U}\R^m\times\b_{\cal R}\R^{m\times n}} z(r,s)\lambda_x(\md r\d s)$ is $\s$-measurable in
the usual sense. Moreover, for $\sigma$-almost all $x\in\bar\O$ it holds that 
\begin{align}
T_\Lambda(f_0,\psi_0)(x)=\int_{\b_\mathcal{U}\R^m\times\b_{\cal R}\R^{m\times n}}f_0 (r)\psi_0(s)\lambda_x(\md r\md s)\ .
\end{align}
Altogether, we see that \eqref{limit1} can be rewritten as 
\begin{align}\label{limit2}
&\lim_{k\to\infty}\int_\O g(x) f_0(u_k(x))\psi(w_k(x))\,\md x=\int_{\bar\O}g(x)\int_{\b_\mathcal{U}\R^m\times\b_{\cal R}\R^{m\times n}}f_0 (r)\psi_0(s)\lambda_x(\md r\md s)\sigma(\md x)\ .
\end{align}
Applying the slicing-measure decomposition \cite[Thm.~1.5.1]{evans0} to each $\lambda_x$ we write 
$\lambda_x(\md r\md s)=\hat\mu_{s,x}(\md r)\hat\nu_{x}(\md s)$. As $\lambda_x$ is a probability measure we get that both  
$\hat\mu_{s,x}$ as well as $\hat\nu_x$ are probability measures on $\b_\mathcal{U}\R^m$ and $\b_\mathcal{R}\R^{m\times n}$, respectively.
Plugging  this decomposition into \eqref{limit2} and testing it with $f_0:=1$, we get 
\begin{align}\label{limit3}
\lim_{k\to\infty}\int_\O g(x) \psi(w_k(x))\,\md x=\int_{\bar\O}g(x)\int_{\b_{\cal R}\R^{m\times n}}\psi_0(s)\hat\nu_x(\md s)\sigma(\md x)\ .
\end{align}
This means that $(\sigma,\hat\nu)$ is the  DiPerna-Majda measure \emph{generated by $\{w_k\}$} \cite{diperna-majda}. 
\hfill$\Box$

In the situation of Theorem~\ref{thm1}, passing to a subsequence (not relabeled) if necessary, we may assume in addition that $\{(u_k,w_k)\}$ generates the (classical) Young measure $\xi_x$. Using the slicing-measure decomposition \cite[Thm.~1.5.1]{evans0} as before, we can always decompose
$\xi_x(\md(r,s))=\mu_{x,s}(\md r) \nu_x(\md s)$, so that
$$
\begin{aligned}
	\int_\O g(x) f_0(u_k) \psi_0(w_k)\,\md x\to  &\int_\O \int_{\R^m\times \R^{m\times n}} g(x)f_0(r)\psi_0(s)\,\xi_x(\md(r,s))\md x \\
	&=\int_\O \int_{\R^{m\times n}} \int_{\R^m} g(x)f_0(r)\psi_0(s)\,\mu_{x,s}(\md r)\nu_x(\md s)\md x,
\end{aligned}
$$
in particular for every $f_0\in\mathcal{U}$, every $\psi_0\in\mathcal{R}$ and every $g\in C(\bar\O)$.
The link between $(\mu,\nu)$ and $(\hat\mu,\hat\nu)$ is the following:
\begin{corollary}\label{cor:YMpair-DMpair}
In the situation of Theorem~\ref{thm1}, let $\xi_x(\md(r,s))=\mu_{x,s}(\md r) \nu_x(\md s)$ be the Young measure generated by $\{(u_k,w_k)\}$.
Then $\md x=\left(\int_{\R^{m\times n}}\frac{1}{1+|t|^p}\hat\nu_x(\md t)\right)\,\sigma(\md x)$, and for a.e.~$x\in \O$,
\begin{align}\label{YMpair-DMpair-1}
  \nu_x(\md s)=\left(\int_{\R^{m\times n}}\frac{1}{1+|t|^p}\hat\nu_x(\md t)\right)^{-1}\frac{\hat\nu_x(\md s)}{1+|s|^p}
\end{align}
(this is actually the well known connection between the DiPerna-Majda-measure and the associated Young measure) and 
\begin{align}\label{YMpair-DMpair-2}
	\mu_{x,s}=\hat\mu_{x,s}~~\text{for $\hat\nu_x$-a.e.~$s\in \R^{m\times n}$}
\end{align}
\end{corollary}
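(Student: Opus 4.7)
The first two identities are the classical correspondence between a DiPerna--Majda measure and its associated Young measure. Setting $f_0\equiv 1$ in \eqref{generate} collapses Theorem~\ref{thm1} to the DiPerna--Majda representation \eqref{basic} for $\{w_k\}$, so $(\sigma,\hat\nu)$ is the DiPerna--Majda measure generated by $\{w_k\}$; and by the slicing decomposition $\xi_x=\mu_{x,s}\otimes\nu_x$, the marginal $\nu_x$ is the classical Young measure of $\{w_k\}$. The stated formulas for $\md x$ and for $\nu_x$ then follow directly from Proposition~\ref{characterization}(3), and they can be combined into
\begin{equation}\label{cor:proof-joint}
\nu_x(\md s)\,\md x \;=\; \frac{\hat\nu_x(\md s)}{1+|s|^p}\,\sigma(\md x)\qquad\text{on $\O\times\R^{m\times n}$.}
\end{equation}

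To prove \eqref{YMpair-DMpair-2}, the plan is to compute the limit of $\int_\O g(x)f_0(u_k)\phi(w_k)\,\md x$ in two different ways, for $g\in C(\bar\O)$, $f_0\in\mathcal{U}$, and $\phi\in C_0(\R^{m\times n})$ chosen so that $\psi_0:=\phi/(1+|\cdot|^p)\in\mathcal{R}$. On one hand, the integrand is uniformly bounded on the finite-measure domain $\O$ and hence equiintegrable, so the Young measure $\xi_x$ of $\{(u_k,w_k)\}$ combined with \eqref{cor:proof-joint} gives
\[
  \lim_{k\to\infty}\int_\O g(x)f_0(u_k)\phi(w_k)\,\md x \;=\; \int_\O g(x)\iint f_0(r)\frac{\phi(s)}{1+|s|^p}\,\mu_{x,s}(\md r)\,\hat\nu_x(\md s)\,\sigma(\md x).
\]
On the other hand, Theorem~\ref{thm1} applied to the very same sequence with the above $\psi_0$ (so that $\psi=\phi$) rewrites the same limit as
\[
  \int_{\bar\O}g(x)\iint f_0(r)\frac{\phi(s)}{1+|s|^p}\,\hat\mu_{s,x}(\md r)\,\hat\nu_x(\md s)\,\sigma(\md x),
\]
where the $s$-integration is effectively confined to $\R^{m\times n}$ since $\psi_0$ vanishes on the corona $\b_\mathcal{R}\R^{m\times n}\setminus\R^{m\times n}$. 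Equating the two expressions and letting $g$, $f_0\in\mathcal{U}$, and $\phi$ range over countable dense families (using that $\mathcal{U}$ determines measures on $\b_\mathcal{U}\R^m$) yields $\mu_{x,s}=\hat\mu_{s,x}$ for a.e.\ $x\in\O$ and $\hat\nu_x$-a.e.\ $s\in\R^{m\times n}$.

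The main technical obstacle is that $\phi/(1+|\cdot|^p)$ need not belong to the prescribed ring $\mathcal{R}$. This is automatic for the standard compactifications (one-point or sphere), where $\mathcal{R}\supset C_0(\R^{m\times n})$. In the general case, one enlarges $\mathcal{R}$ to a finer separable ring containing $\mathcal{R}$ together with $\phi/(1+|\cdot|^p)$ for $\phi$ in a countable dense subset of $C_0(\R^{m\times n})$, as permitted by Remark~\ref{finercompactification}, passes to a further subsequence, and observes that the restrictions of $\hat\nu_x$ and of the $r$-disintegration to the common subset $\R^{m\times n}$ are intrinsic (i.e., unaffected by the refinement). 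The argument above then applies verbatim in the refined setting and gives the required identification.
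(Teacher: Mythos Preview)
Your argument is correct and follows essentially the same route as the paper's own proof: compute $\lim_k\int_\O g\,f_0(u_k)\,\psi(w_k)\,\md x$ once via the Young measure $\xi_x=\mu_{x,s}\otimes\nu_x$ and once via \eqref{generate}, restrict to test functions $\psi_0$ vanishing on the corona so that both sides involve only $s\in\R^{m\times n}$, and identify. The paper parametrizes by $\psi_0\in C_0(\R^{m\times n})$ (so $\psi=\psi_0(1+|\cdot|^p)$ has sub-$p$ growth), while you parametrize by $\phi\in C_0$ and set $\psi_0=\phi/(1+|\cdot|^p)$; these are interchangeable for the purpose of identifying measures. The paper also re-derives \eqref{YMpair-DMpair-1} inside the same comparison (by taking $f_0\equiv 1$), whereas you invoke Proposition~\ref{characterization} directly; either order works.

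One remark on your ``main technical obstacle'': it is not actually an obstacle. For any complete separable ring $\mathcal{R}$ in the sense used here one has $\mathcal{R}\cong C(\beta_{\mathcal{R}}\R^{m\times n})$, and every $f\in C_0(\R^{m\times n})$ extends continuously by zero to the (metrizable) compactification, so $C_0(\R^{m\times n})\subset\mathcal{R}$ automatically. In particular $\phi/(1+|\cdot|^p)\in C_0\subset\mathcal{R}$ whenever $\phi\in C_0$, and the paper simply uses this without comment. Your refinement-of-the-ring workaround is therefore unnecessary, though harmless.
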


\begin{proof}%[of Corollary~\ref{cor:YMpair-DMpair}]
In the following, let $\psi_0\in C_0(\R^{m\times n})$, i.e., $\psi_0\in\mathcal{R}$ with the added property that $\psi_0(s)=0$ for every $s\in \beta_{\mathcal R}\R^{m\times n}\setminus \R^{m\times n}$.
Consequently, $\psi(s):=\psi_0(s)(1+|s|^p)$ satisfies $(1+|s|^p)^{-1}\psi(s)\to 0$ as $|s|\to \infty$ ($s\in \R^{m\times n}$) and 
$\frac{\psi(s)}{1+|s|^p}=0$ for $s\in \b_\mathcal{R}\R^{m\times n}\setminus \R^{m\times n}$.
In addition, let $g\in C(\bar\O)$ and $f_0\in \mathcal{U}$.
From \eqref{limit2}, also using the decomposition $\lambda_x(\md r\md s)=\hat\mu_{s,x}(\md r)\hat\nu_{x}(\md s)$,
we get that 
\begin{align}\label{corYMDM-1}
\lim_{k\to\infty}\int_\O g(x) f_0(u_k(x))\psi(w_k(x))\,\md x
=\int_{\bar\O} g(x)\int_{\R^{m\times n}}\int_{\b_\mathcal{U}\R^m} f_0 (r)\hat\mu_{s,x}(\md r)\frac{\psi(s)\hat\nu_x(\md s)}{1+|s|^p}\sigma(\md x)\ .
\end{align}
Moreover, since $f_0$ is bounded, $\{w_k\}$ is bounded in $L^p$ and $\psi$ has less than $p$-growth, the left hand side can be expressed using the Young measure $\xi_x(\md(r,s))=\mu_{x,s}(\md r) \nu_x(\md s)$ generated by $\{(u_k,w_k)\}$:
\begin{align}\label{corYMDM-2}
\lim_{k\to\infty}\int_\O g(x) f_0(u_k(x))\psi(w_k(x))\,\md x
=\int_{\O}g(x)\int_{\R^{m\times n}}\int_{\R^m} f_0 (r)\mu_{s,x}(\md r) \psi(s)\nu_x(\md s) \md x\ .
\end{align}
Since $(\sigma,\hat\nu)$ is a DiPerna-Majda measure (the one generated by $\{w_k\}$), we in particular know that
the density of the Lebesgue measure with respect to $\sigma$ is given by
$$
\frac{\md\mathcal{L}^n}{\md\sigma}(x)=\int_{\R^{m\times n}}\frac{\hat\nu_x(\md s)}{1+|s|^p}\ ,
$$
cf.~Proposition~\ref{characterization} (ii).
Hence, we can also write the outer integral on right hand side of \eqref{corYMDM-2} as an integral with respect to $\sigma$, and then compare it to the right hand side of \eqref{corYMDM-1}. 
Since $g$ is arbitrary, this implies that for $\sigma$-a.e.~$x\in\O$, 
\begin{align}
\begin{aligned}\label{corYMDM-3}
\Big(\int_{\R^{m\times n}}\int_{\R^m} f_0 (r)\mu_{s,x}(\md r) \psi(s)\nu_x(\md s)\Big)
\Big(\int_{\R^{m\times n}}\frac{\hat\nu_x(\md t)}{1+|t|^p}\Big)&\\
\qquad=
\int_{\R^{m\times n}}\int_{\b_\mathcal{U}\R^m} f_0 (r)\hat\mu_{s,x}(\md r)\frac{\psi(s)\hat\nu_x(\md s)}{1+|s|^p}&\ .
\end{aligned}
\end{align}
Here, also notice that it is enough to state \eqref{corYMDM-3} for a.e.~$x\in\O$, because $\mathcal{L}^n$ is absolutely continuous with respect to $\sigma$ and $\int_{\R^{m\times n}}\frac{\hat\nu_x(\md t)}{1+|t|^p}=0$ for $\sigma^s$-a.e.~$x\in \bar\O$. 

Using the probability measure given by the right hand side of \eqref{YMpair-DMpair-1}, i.e.,
\[
	\nu_x(\md s):=\Big(\int_{\R^{m\times n}}\frac{\hat\nu_x(\md t)}{1+|t|^p}\Big)^{-1}\frac{\hat\nu_x(\md s)}{1+|s|^p},
\]
we see that \eqref{corYMDM-3} is equivalent to
\begin{align}
\begin{aligned}\label{corYMDM-4}
\int_{\R^{m\times n}}\int_{\R^m} f_0 (r)\mu_{s,x}(\md r) \psi(s)\nu_x(\md s)
=
\int_{\R^{m\times n}}\int_{\b_\mathcal{U}\R^m} f_0 (r)\hat\mu_{s,x}(\md r)\psi(s)\tilde\nu_x(\md s)&\ .
\end{aligned}
\end{align}
Since \eqref{corYMDM-4} holds for all $\psi_0\in C_0(\R^{m\times n})$ (and therefore all $\psi$ with less than $p$-growth, in particular all bounded $\psi$) and $\mu_{s,x}$ and $\hat\mu_{s,x}$ are probability measures, choosing $f_0\equiv 1\in \mathcal{U}$ 
in \eqref{corYMDM-4} yields that $\nu_x=\tilde\nu_x$, i.e., \eqref{YMpair-DMpair-1}. 
Finally, replacing $\tilde\nu_x$ by $\nu_x$ in \eqref{corYMDM-4}, and using that the latter holds in particular for all bounded $\psi\in C(\R^{m\times n})$
and all $f_0\in C_0(\R^m)\subset \mathcal{U}$, we infer \eqref{YMpair-DMpair-2}.
\end{proof}

\begin{remark}\label{rem:DMpair-Dirac}
In the situation of Corollary~\ref{cor:YMpair-DMpair}, suppose in addition that $u_k\to u$ in $L^q$ for some $q\geq 1$ (for instance by compact embedding, if $\{u_k\}$ is bounded in $W^{1,p}$).
We recall that in this case, for the Young measure $\xi_x(\md (r,s))=\mu_{x,s}(\md r)\nu_x (\md s)$ generated by $\{(u_k,w_k\}$ we have $\mu_{x,s}=\delta_{u(x)}$ for a.e.~$x\in\O$ (in particular independent of $s$, cf.~\cite[Proposition 6.13]{pedregal}, e.g.).
Consequently, \eqref{YMpair-DMpair-2} implies that
\begin{align}\label{YMidentity2}
	\hat\mu_{x,s}=\delta_{u(x)}~~\text{for a.e.~$x\in \O$ and $\hat\nu_x$-a.e.~$s\in \R^{m\times n}$}
\end{align}
\end{remark}

\begin{remark}\label{uniform}
 It is left to the interested reader to show that if $u_k\to u$ in $C(\bar\O;\R^m)$ for $k\to\infty$ then $\hat\mu_{s,x}=\delta_{u(x)}$ for $\sigma$-a.e.~$x\in\bar\O$. Also, $\hat\mu_{s,x}$ is then
supported only on $\R^m$, so it is independent of the choice of the compactification  $\beta_{\mathcal{U}}\R^m$.
\end{remark}
The next statement is similar to Theorem \ref{thm1}, but now we consider the limits of the sequence  $\int_\O f_0(u_k)\psi_0(w_k)(1+|u_k|^q)\,\md x$ where $f_0\in \mathcal{U}, \psi_0\in\mathcal{R}$.
In particular, the integrand $|u_k|^q$ will thus be admissible.
Its proof can easily be deduced by adapting the proof of Theorem~\ref{thm1}, essentially interchanging the role of the two sequences.
\begin{theorem}\label{thm1b}
Let $1\le q< +\infty$ and $1\le p\le +\infty$. Let  $\{u_k\}_{k\in\N}$ be  bounded sequence in  $L^q(\O;\R^m)$ and $\{w_k\}$ a bounded sequence in $L^p(\O;\R^{m\times n})$. Then there is a (non-relabeled) subsequence $\{(u_k,w_k)\}$,
a positive measure $\sigma^*\in \mathcal{M}(\bar\O)$ and parametrized probability measures 
$\hat\nu^*\in \mathcal{Y}(\bar\O;\b_\mathcal{R}\R^{m\times n})$ (defined $\sigma^*$-a.e.) and $\hat\mu^*\in \mathcal{Y}(\bar\O\times\b_\mathcal{R}\R^{m\times n};\b_\mathcal{U}\R^{m})$ (defined $\sigma^*\otimes \hat\nu^*_x$-a.e.) such that for every $f_0\in\mathcal{U}$, every $\psi_0\in\mathcal{R}$ and every $g\in C(\bar\O)$ 
\begin{align}\label{generatestar}
\begin{aligned}
&\lim_{k\to\infty}\int_\O g(x) f(u_k(x))\psi_0(w_k(x))\,\md x\\
&\qquad=\int_{\bar\O}\int_{\b_\mathcal{R}\R^{m\times n}}\int_{\b_\mathcal{U}\R^m}g(x)f_0 (r)\psi_0(s)\hat\mu_{s,x}^*(\md r)\hat\nu_x^*(\md s)\sigma^*(\md x)\ ,
\end{aligned}
\end{align}
where $f(r):=f_0(r)(1+|r|^q)$. Moreover, 
$(\sigma^*,\overline{\hat\mu^*}_{x})\in {\cal DM}^q_{\cal U}(\bar\O;\R^{m})$ is the 
the DiPerna-Majda measure generated by
$\{ u_k\}$, where $\overline{\hat\mu^*}_x$ is given as follows:
\begin{align}\label{barmustar}
\int_{\b_\mathcal{U}\R^m} f_0(r)\overline{\hat\mu^*}_x(\md r)=
\int_{\b_\mathcal{R}\R^{m\times n}} \int_{\b_\mathcal{U}\R^m} f_0(r) \hat\mu_{s,x}^*(\md r) \hat\nu_x^*(\md s)
\end{align} 
for all $f_0\in \mathcal{U}$ and $\sigma^*$-a.e.~$x\in\bar \O$.
\end{theorem}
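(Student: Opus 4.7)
\medskip\noindent
\textbf{Proof sketch.} The plan is to mirror the proof of Theorem~\ref{thm1}, swapping the roles of the two sequences. Here the growth in the test integrand is carried by $u_k$ (through the factor $1+|r|^q$), while $\psi_0$ remains bounded, so the natural controlling measure on $\bar\O$ is the weak$^*$ limit $\sigma^*$ of $(1+|u_k|^q)\,\md x$ in $\rca(\bar\O)$ (obtained along a subsequence, using that $\{1+|u_k|^q\}$ is bounded in $L^1$). Exploiting separability of $\mathcal{U}$, $\mathcal{R}$ and $C(\bar\O)$, a diagonal extraction produces a further subsequence along which
\[
\lim_{k\to\infty}\int_\O g(x)f_0(u_k(x))(1+|u_k(x)|^q)\psi_0(w_k(x))\,\md x
=\la\Lambda,g\otimes f_0\otimes\psi_0\ra
\]
exists for every $g\in C(\bar\O)$, $f_0\in\mathcal{U}$, $\psi_0\in\mathcal{R}$, and defines, via Stone--Weierstrass density of $C(\bar\O)\otimes C(\b_\mathcal{U}\R^m)\otimes C(\b_\mathcal{R}\R^{m\times n})$ in $C(\bar\O\times\b_\mathcal{U}\R^m\times\b_\mathcal{R}\R^{m\times n})$, a Radon measure $\Lambda$ on the compact product space.

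Next, as in \eqref{Tbound}, testing with $f_0\equiv 1$, $\psi_0\equiv 1$ gives the estimate
\[
|\la\Lambda,g\otimes f_0\otimes\psi_0\ra|
\le \|f_0\|_{C(\b_\mathcal{U}\R^m)}\|\psi_0\|_{C(\b_\mathcal{R}\R^{m\times n})}\int_{\bar\O}|g(x)|\,\sigma^*(\md x),
\]
which by Radon--Nikod\'ym and the same functional-analytic identification used in the proof of Theorem~\ref{thm1} (representing the dual of $L^1(\bar\O,\sigma^*;C(\b_\mathcal{U}\R^m\times\b_\mathcal{R}\R^{m\times n}))$) yields a $\sigma^*$-weakly$^*$ measurable family $\{\lambda_x^*\}_{x\in\bar\O}$ of probability measures on $\b_\mathcal{U}\R^m\times\b_\mathcal{R}\R^{m\times n}$ such that
\[
\la\Lambda,g\otimes f_0\otimes\psi_0\ra
=\int_{\bar\O}g(x)\int_{\b_\mathcal{U}\R^m\times\b_\mathcal{R}\R^{m\times n}}f_0(r)\psi_0(s)\,\lambda_x^*(\md r\,\md s)\,\sigma^*(\md x).
\]
The slicing-measure theorem \cite[Thm.~1.5.1]{evans0} applied with the projection onto the $\b_\mathcal{R}\R^{m\times n}$-factor then gives the decomposition $\lambda_x^*(\md r\,\md s)=\hat\mu^*_{s,x}(\md r)\,\hat\nu^*_x(\md s)$ with both factors probability-valued; this is precisely \eqref{generatestar}.

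Finally, to identify $(\sigma^*,\overline{\hat\mu^*})$ as the DiPerna--Majda measure generated by $\{u_k\}$ in the ring $\mathcal{U}$, one tests \eqref{generatestar} with $\psi_0\equiv 1\in\mathcal{R}$. The right-hand side then collapses, by definition \eqref{barmustar}, to $\int_{\bar\O}g(x)\int_{\b_\mathcal{U}\R^m}f_0(r)\,\overline{\hat\mu^*}_x(\md r)\,\sigma^*(\md x)$, while the left-hand side equals $\lim_k\int_\O g(x)f(u_k(x))\,\md x$. Comparing with \eqref{basic} applied to $\{u_k\}\subset L^q(\O;\R^m)$ and the ring $\mathcal{U}$, one reads off that $(\sigma^*,\overline{\hat\mu^*})\in{\cal DM}^q_{\cal U}(\bar\O;\R^m)$ is indeed the DiPerna--Majda measure of $\{u_k\}$. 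The main subtlety is the choice of slicing direction: the growth direction is on the $r$-side, so the natural disintegration must place $r$ \emph{inside} and $s$ \emph{outside}, and only after integrating out $s$ via \eqref{barmustar} does one recover the classical DiPerna--Majda structure on the $u$-variable. Everything else reduces to the bookkeeping already carried out in the proof of Theorem~\ref{thm1}.
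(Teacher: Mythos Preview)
Your proposal is correct and follows exactly the route the paper indicates: the paper does not give a separate proof of Theorem~\ref{thm1b} but simply states that it ``can easily be deduced by adapting the proof of Theorem~\ref{thm1}, essentially interchanging the role of the two sequences,'' and this is precisely what you do. Your observation about the slicing direction---that one must still disintegrate with the $s$-marginal outside (as forced by the statement), and therefore recover the DiPerna--Majda measure of $\{u_k\}$ only after averaging out $s$ via \eqref{barmustar}---is a genuine point that the paper leaves implicit, and it is handled correctly.
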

Analogously to Corollary~\ref{cor:YMpair-DMpair}, we have
\begin{corollary}\label{cor:YMpair-DMpair-b}
In the situation of Theorem~\ref{thm1b}, let $\xi_x(\md (r,s))=\mu_{x,s}(\md r)\nu_x (\md s)$ be the Young measure generated by $\{(u_k,w_k)\}$.
Then $\md x=\left(\int_{\beta_\mathcal{R}\R^{m\times n}}\int_{\R^{m}}\frac{1}{1+|z|^q}\hat\mu_{x,t}^*(\md z)\hat\nu_x^*(\md t)\right)\,\sigma^*(\md x)$, and
for a.e.~$x\in \O$,
\begin{align}\label{YMpair-DMpair-1b}
  \nu_x(\md s)=
  %\beta_\mathcal{R}
  \left(\int_{\beta_\mathcal{R}\R^{m\times n}}\int_{\R^{m}}\frac{1}{1+|z|^q}\hat\mu_{x,t}^*(\md z)\hat\nu_x^*(\md t)\right)^{-1}
  \left(\int_{\R^{m}}\frac{1}{1+|z|^q}\hat\mu_{x,s}^*(\md z)\right)
  \hat\nu_x^*(\md s),
\end{align}
\begin{align}\label{YMpair-DMpair-2b}
	\mu_{x,s}(\md r)=
	\left(\int_{\R^{m}}\frac{1}{1+|z|^q}\hat\mu_{x,s}^*(\md z)\right)^{-1}\frac{\hat\mu_{x,s}(\md r)}{1+|r|^q}~~\text{for $\hat\nu_x$-a.e.~$s\in \beta_{\mathcal{R}}\R^{m\times n}$.}
\end{align}
\end{corollary}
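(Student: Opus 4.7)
The plan is to adapt the proof of Corollary~\ref{cor:YMpair-DMpair}, swapping the roles of the two variables and exploiting that, by the last assertion of Theorem~\ref{thm1b}, the pair $(\sigma^*, \overline{\hat\mu^*}_x)$ is itself a DiPerna-Majda measure (the one generated by $\{u_k\}$), so that Proposition~\ref{characterization} applies to it directly.

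The first step is to fix $f_0 \in C_0(\R^m)$ and $\psi_0 \in C_0(\R^{m\times n})$; both belong to $\mathcal{U}$ and $\mathcal{R}$ respectively, since they extend continuously by $0$ to the compactification boundaries. With this choice the integrand $f_0(r)(1+|r|^q)\psi_0(s)$ has strictly less than $q$-growth in $r$ and is bounded in $s$, so $\{f_0(u_k)(1+|u_k|^q)\psi_0(w_k)\}$ is uniformly integrable and $\lim_k \int_\O g\,f_0(u_k)(1+|u_k|^q)\psi_0(w_k)\,\md x$ can be expressed both using the classical Young measure $\xi_x = \mu_{x,s}\otimes\nu_x$ generated by $\{(u_k,w_k)\}$ and, via Theorem~\ref{thm1b}, as an integral against $\hat\mu^*_{s,x}\otimes \hat\nu^*_x\otimes \sigma^*$; since $f_0$ and $\psi_0$ vanish at infinity, the latter's inner integrations reduce to $\R^m$ and $\R^{m\times n}$, respectively.

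Next I would compute the Radon-Nikodym derivative of $\md x$ with respect to $\sigma^*$. Proposition~\ref{characterization}(iii) applied to $(\sigma^*, \overline{\hat\mu^*}_x)$ yields
\[
\md x = \Big(\int_{\R^m}\frac{\overline{\hat\mu^*}_x(\md z)}{1+|z|^q}\Big)\sigma^*(\md x)
\]
as measures on $\O$. Substituting \eqref{barmustar} with the test function $z\mapsto 1/(1+|z|^q)$, which belongs to $\mathcal{U}$ since it extends by $0$ continuously to $\b_\mathcal{U}\R^m$, gives the first identity claimed in the corollary. Combining this with the equality of the two expressions for the limit and using that $g\in C_c(\O)$ is arbitrary then yields, for a.e.~$x\in\O$,
\[
\rho(x)\int\!\!\int f_0(r)(1+|r|^q)\psi_0(s)\mu_{x,s}(\md r)\nu_x(\md s) = \int\!\!\int f_0(r)\psi_0(s)\hat\mu^*_{s,x}(\md r)\hat\nu^*_x(\md s),
\]
with $\rho(x)$ the density above, valid for all $(f_0,\psi_0)$ in a countable dense family. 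To extract \eqref{YMpair-DMpair-1b}, I would let $f_0$ approximate the constant $1$ pointwise from below by smooth cutoffs, apply monotone convergence on the left and dominated convergence on the right, and then vary $\psi_0 \in C_0(\R^{m\times n})$ to identify the two measures on $\R^{m\times n}$. Reinserting this formula for $\nu_x$ back into the displayed identity and cancelling yields \eqref{YMpair-DMpair-2b}, valid for $\hat\nu_x$-a.e.~$s$.

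The main obstacle is measure-theoretic bookkeeping: one must verify that $1/(1+|z|^q)$ and the cutoff approximants of $1$ genuinely belong to the ring $\mathcal{U}$, keep track of which compactification each parametrized measure is defined on and where its mass may concentrate, and handle the singular part of $\sigma^*$ on $\O$ correctly by checking that $\rho(x) = 0$ there. The latter follows because $\overline{\hat\mu^*}_x(\R^m) = 0$ on the singular support (by Proposition~\ref{characterization}(ii)), which via an approximation argument in \eqref{barmustar} implies $\hat\mu^*_{s,x}(\R^m) = 0$ for $\hat\nu^*_x$-a.e.~$s$, and hence $\rho(x) = 0$. Apart from this care, the argument is a direct analogue of the one used for Corollary~\ref{cor:YMpair-DMpair}.
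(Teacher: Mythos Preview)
Your overall plan is sound and parallels the paper's argument for Corollary~\ref{cor:YMpair-DMpair}, but the step ``let $f_0$ approximate the constant $1$'' is the wrong choice of test function and does not deliver \eqref{YMpair-DMpair-1b}. On the left of your displayed identity the integrand in $r$ is $f_0(r)(1+|r|^q)$, so as $f_0\uparrow 1$ you obtain
\[
\rho(x)\int_{\R^{m\times n}}\Big(\int_{\R^m}(1+|r|^q)\,\mu_{x,s}(\md r)\Big)\psi_0(s)\,\nu_x(\md s)
=\int_{\R^{m\times n}}\hat\mu^*_{s,x}(\R^m)\,\psi_0(s)\,\hat\nu^*_x(\md s),
\]
which still contains the unknown weight $\int_{\R^m}(1+|r|^q)\,\mu_{x,s}(\md r)$ on the left and $\hat\mu^*_{s,x}(\R^m)$ (rather than $\int_{\R^m}(1+|z|^q)^{-1}\hat\mu^*_{x,s}(\md z)$) on the right. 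Varying $\psi_0$ here does not isolate $\nu_x$; it gives a correct but entangled relation, not \eqref{YMpair-DMpair-1b}.

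The direct analogue of the paper's choice $f_0\equiv 1$ in the proof of Corollary~\ref{cor:YMpair-DMpair} is to take $f_0(r)=(1+|r|^q)^{-1}\in C_0(\R^m)\subset\mathcal{U}$, so that $f(r)=f_0(r)(1+|r|^q)\equiv 1$. Since both $\mu_{x,s}$ and the candidate $\tilde\mu_{x,s}(\md r):=\big(\int_{\R^m}(1+|z|^q)^{-1}\hat\mu^*_{x,s}(\md z)\big)^{-1}(1+|r|^q)^{-1}\hat\mu^*_{x,s}(\md r)$ are probability measures on $\R^m$, the inner $r$-integrals both equal $1$, and varying $\psi_0\in C_0(\R^{m\times n})$ yields $\nu_x=\tilde\nu_x$ on $\R^{m\times n}$, i.e.\ \eqref{YMpair-DMpair-1b}, with no limiting argument needed. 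Reinserting and varying $f_0\in C_0(\R^m)$ then gives \eqref{YMpair-DMpair-2b} for $\nu_x$-a.e.~$s$. Your handling of the density $\rho$ via Proposition~\ref{characterization} and \eqref{barmustar}, and of the singular part of $\sigma^*$, is correct.
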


Analogous to the case of Young measures or DiPerna-Majda-measures, we say that $(\sigma,\hat\nu,\hat\mu)$ [or $(\sigma^*,\hat\nu^*,\hat\mu^*)$, respectively] \emph{is generated by $\{(u_k,w_k)\}$}
 whenever \eqref{generate} [(\eqref{generatestar}] holds for all $(g,f_0,\psi_0)\in C(\bar\O)\times \mathcal{U}\times \mathcal{R}$.

Theorem~\ref{thm1} and Theorem~\ref{thm1b} can be combined, leading
to the following statement. It provides a representation for limits of rather general nonlinear functionals along a given sequence. The suitable class of integrands is 
\begin{align}\label{defHpq}
\HH^{q,p}(\O,\mathcal{U},\mathcal{R})=\left\{~ 
h
~\left|
~\begin{array}{ll}
h(x,r,s)=h_0^{(1)}(x,r,s)(1+|s|^p)+h_0^{(2)}(x,r,s)(1+|r|^q)\\
h_0^{(1)},h_0^{(2)}\in C(\bar\O\times \b_\mathcal{U}\R^{m}\times \b_\mathcal{R}\R^{m\times n})
\end{array}\right.\right\} .
\end{align}
\begin{theorem}[representation theorem]\label{thm-rep}
Let  $1\le q\le +\infty$ and $1\le p<+\infty$. Let  $\{u_k\}_{k\in\N}$ be  bounded sequence in  $L^q(\O;\R^m)$ and $\{w_k\}$ a bounded sequence in $L^p(\O;\R^{m\times n})$. Then there is a (non-relabeled) subsequence $\{(u_k,w_k)\}$ 
generating the measures $(\sigma,\hat\nu,\hat\mu)$ and $(\sigma^*,\hat\nu^*,\hat\mu^*)$ (in the sense of \eqref{generate} and \eqref{generatestar}, respectively), and in addition, 
for every $h_0^{(1)},h_0^{(2)}\in C(\bar\O\times \b_\mathcal{U}\R^{m}\times \b_\mathcal{R}\R^{m\times n})$, 
\begin{align}\label{generateall}
\begin{aligned}
&\lim_{k\to\infty}\int_\O \big(h_0^{(1)}(x,u_k,w_k)(1+ |w_k|^p)+h_0^{(2)}(x,u_k,w_k)(1+ |u_k|^q)  )\big)\,\md x\\
& \qquad= \begin{aligned}[t]
& \int_{\bar\O}\int_{\b_\mathcal{R}\R^{m\times n}}\int_{\b_\mathcal{U}\R^m}h_0^{(1)} (x,r,s)\hat\mu_{s,x}(\md r)\hat\nu_x(\md s)\sigma(\md x)\  \\
&  +\int_{\bar\O}\int_{\b_\mathcal{R}\R^{m\times n}}\int_{\b_\mathcal{U}\R^m}h_0^{(2)} (x,r,s)\hat\mu^*_{s,x}(\md r)\hat\nu^*_x(\md s)\sigma^*(\md x)\ .
\end{aligned}
\end{aligned}
\end{align}
%the following measures:
%\begin{itemize}
%\item $(\sigma,\hat\nu)\in\cdm$ - 
% a DiPerna-Majda measure generated by $\{ w_k\}$ , $\hat\mu\in\mathcal{Y}(\bar\O\times \b_\mathcal{R}\R^{m\times n};\b_\mathcal{U}\R^m)$; 
%\item $(\sigma^*,\overline{\hat\mu^*}_{x})\in {\cal DM}^q_{\cal U}(\bar\O;\R^{m})$ - a DiPerna-Majda measure generated by by $\{ u_k\}$
%\item 
% parametrized probability measures 
%$\hat\nu^*\in \mathcal{Y}(\bar\O;\b_\mathcal{R}\R^{m\times n})$ (defined $\sigma^*$-a.e.), $\hat\mu^*\in \mathcal{Y}(\bar\O\times\b_\mathcal{R}\R^{m\times n};\b_\mathcal{U}\R^{m})$ (defined $\sigma^*\otimes \hat\nu^*_x$-a.e.), 
% linked by
%$$\int_{\b_\mathcal{U}\R^m} f_0(r)\overline{\hat\mu^*}_{x}(\md r):=
%\int_{\b_\mathcal{U}\R^m} \int_{\b_\mathcal{R}\R^{m\times n}} f_0(r) \hat\mu_{s,x}^*(\md r) \hat\nu_x^*(\md s)$$ for all $f_0\in \mathcal{R}$ and $\sigma^*$-a.e.~$x\in\bar{\O}$
%\end{itemize}
\end{theorem}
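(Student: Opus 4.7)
The plan is to reduce Theorem~\ref{thm-rep} to Theorems~\ref{thm1} and~\ref{thm1b} by first choosing a common subsequence on which both representations are valid, and then extending the class of admissible integrands from tensor products in $C(\bar\O)\otimes \mathcal{U}\otimes \mathcal{R}$ to arbitrary elements of $C(\bar\O\times \b_\mathcal{U}\R^{m}\times \b_\mathcal{R}\R^{m\times n})$ by a density argument.

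\emph{Step 1 (Diagonal extraction).} I first apply Theorem~\ref{thm1} to $\{(u_k,w_k)\}$ to extract a subsequence (not relabeled) and measures $(\sigma,\hat\nu,\hat\mu)$ such that \eqref{generate} holds. Then I apply Theorem~\ref{thm1b} to this subsequence, extracting a further subsequence together with $(\sigma^*,\hat\nu^*,\hat\mu^*)$ such that \eqref{generatestar} holds. Since passing to a further subsequence preserves \eqref{generate}, both representation formulas hold simultaneously on the final subsequence.

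\emph{Step 2 (Extension via Stone--Weierstrass).} I focus on the term with $h_0^{(1)}$; the argument for $h_0^{(2)}$ is analogous, using \eqref{generatestar} in place of \eqref{generate}. By the Stone--Weierstrass theorem (as noted below the definition of $C(\bar\O)\otimes\mathcal{U}\otimes\mathcal{R}$ in the paper), the linear span of the tensor products $g\otimes f_0\otimes \psi_0$ with $g\in C(\bar\O)$, $f_0\in \mathcal{U}$, $\psi_0\in\mathcal{R}$ is dense in $C(\bar\O\times \b_\mathcal{U}\R^{m}\times \b_\mathcal{R}\R^{m\times n})$ with respect to the sup norm. Given $h_0^{(1)}$ and $\e>0$, pick a finite linear combination $h^\e$ of such tensor products with $\|h_0^{(1)}-h^\e\|_\infty<\e$. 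Theorem~\ref{thm1} applied to each summand of $h^\e$ yields
\begin{align*}
\lim_{k\to\infty}\int_\O h^\e(x,u_k,w_k)(1+|w_k|^p)\,\md x
=\int_{\bar\O}\!\int_{\b_\mathcal{R}\R^{m\times n}}\!\int_{\b_\mathcal{U}\R^m} h^\e(x,r,s)\,\hat\mu_{s,x}(\md r)\hat\nu_x(\md s)\sigma(\md x).
\end{align*}
Since $\{w_k\}$ is bounded in $L^p$, there is $M>0$ independent of $k$ with $\int_\O(1+|w_k|^p)\md x\le M$, and $\sigma(\bar\O)\le M$ by \eqref{measure}. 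Therefore the error incurred by replacing $h_0^{(1)}$ by $h^\e$ on either side is at most $\e M$ uniformly in $k$. Letting $k\to\infty$ first and then $\e\to 0$ gives the limit \eqref{generateall} restricted to $h_0^{(2)}\equiv 0$.

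\emph{Step 3 (Combination).} The same density argument applied to Theorem~\ref{thm1b} yields the analogous limit for integrands of the form $h_0^{(2)}(x,u_k,w_k)(1+|u_k|^q)$ expressed via $(\sigma^*,\hat\nu^*,\hat\mu^*)$. Adding the two limits gives \eqref{generateall}. The only point requiring any care is the uniform approximation step, but the bounds $\sup_k\|1+|w_k|^p\|_{L^1}<\infty$ and $\sup_k\|1+|u_k|^q\|_{L^1}<\infty$ make this routine; the main conceptual content of the theorem is really contained in Theorems~\ref{thm1} and~\ref{thm1b} themselves.
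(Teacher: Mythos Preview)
Your proof is correct and matches the paper's approach: the paper does not give an explicit proof of Theorem~\ref{thm-rep} but simply states that ``Theorem~\ref{thm1} and Theorem~\ref{thm1b} can be combined, leading to the following statement,'' which is precisely what you carry out. Your Stone--Weierstrass density step is the natural way to pass from tensor-product test functions $g\otimes f_0\otimes\psi_0$ (as in \eqref{generate} and \eqref{generatestar}) to general $h_0^{(1)},h_0^{(2)}\in C(\bar\O\times\b_\mathcal{U}\R^m\times\b_\mathcal{R}\R^{m\times n})$, and the uniform $L^1$ bounds you invoke make this rigorous.
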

\begin{remark}
As a special case, we recover a representation of the limit for functionals with integrands in $\YY^{q,p}(\O;\mathcal{U};\mathcal{R})$ as in Theorem~\ref{thm0}, since
$$
\tilde{h}_0(x,r,s)(1+|r|^q+|s|^p)=h_0(x,r,s)(1+|r|^q)+h_0(x,r,s)(1+|s|^p),
$$
where
$$
 h_0(x,r,s):=\frac{1+|r|^q+|s|^p}{2+|r|^q+|s|^p}\tilde{h}_0(x,r,s)
$$
The quotient which appears here does not matter, because
$(r,s)\mapsto \frac{1+|r|^q+|s|^p}{2+|r|^q+|s|^p}$ converges to the constant $1$ as $|(r,s)|\to \infty$, and therefore it is an element of $\overline{\mathcal{U}\otimes \mathcal{R}}=C(\b_\mathcal{U}\R^{m}\times \b_\mathcal{R}\R^{m\times n})$.
\end{remark}
\begin{remark}
Notice that $(\sigma,\hat\nu,\hat\mu)$ and $(\sigma^*,\hat\nu^*,\hat\mu^*)$ are not independent, because
they share the same underlying Young measure $\xi_x(\md (r,s))=\mu_{x,s}(\md r)\nu_x (\md s)$, see Corollary~\ref{cor:YMpair-DMpair} and Corollary~\ref{cor:YMpair-DMpair-b}.
Using that, we get yet another representation: For $h\in \HH^{q,p}(\O,\mathcal{U},\mathcal{R})$ (cf.~\eqref{defHpq}),
\begin{align}\label{generateall2}
\begin{aligned}
&\lim_{k\to\infty}\int_\O h(x,u_k,w_k)\,\md x\\
&=\begin{aligned}[t]
&\int_{\bar\O}\int_{\b_\mathcal{R}\R^{m\times n}\setminus \R^{m\times n} }\int_{\b_\mathcal{U}\R^m}h_0^{(1)} (x,r,s)\hat\mu_{s,x}(\md r)\hat\nu_x(\md s)\sigma(\md x)\  \\
&+\int_{\bar\O}\int_{\b_\mathcal{R}\R^{m\times n}}\int_{\b_\mathcal{U}\R^m\setminus \R^m}h_0^{(2)} (x,r,s)\hat\mu^*_{s,x}(\md r)\hat\nu^*_x(\md s)\sigma^*(\md x)\ \\
&+\int_{\O}\int_{\R^{m\times n}}\int_{\R^m} h(x,r,s)
 \mu_{s,x}(\md r)\nu_x(\md s)\md x.
\end{aligned}
\end{aligned}
\end{align}
\end{remark}
\begin{remark}\label{rem:simplify}
If either $\{|u_k|^q\}$ or $\{|w_k|^q\}$ is equi-integrable, then \eqref{generateall2} can be further simplified. 
For instance, if $\{u_k\}$ is bounded in $L^{\tilde{q}}$ for some $\tilde{q}>q$), 
then $\{|u_k|^q\}$ is equi-integrable, and it that case, it is known (e.g., see \cite[Lemma 3.2.14]{r}) that for the associated DiPerna-Majda measure $(\sigma^*,\overline{\hat\mu^*}_x)$, we have that
$\sigma^*$ is absolutely continuous with respect to $\mathcal{L}^n$ and 
$\overline{\hat\mu^*}_x(\b_\mathcal{U}\R^m\setminus \R^m)=0$ for a.e.~$x\in\O$. Due to 
\eqref{barmustar}, the latter implies that
$\hat\mu_{x,s}^*(\b_\mathcal{U}\R^m\setminus \R^m)=0$ for a.e.~$x\in\O$ and $\hat\nu_x^*$-a.e.~$s\in \b_\mathcal{R}\R^{m\times n}$. Accordingly, 
for $h\in \HH^{q,p}(\O,\mathcal{U},\mathcal{R})$ (cf.~\eqref{defHpq}), 
\begin{align}\label{generateall3}
\begin{aligned}
\lim_{k\to\infty}\int_\O h(x,u_k,w_k)\,\md x
&=\begin{aligned}[t]
&\int_{\bar\O}\int_{\b_\mathcal{R}\R^{m\times n}\setminus \R^{m\times n} }\int_{\b_\mathcal{U}\R^m}h_0^{(1)} (x,r,s)\hat\mu_{s,x}(\md r)\hat\nu_x(\md s)\sigma(\md x)\  \\
&+\int_{\O}\int_{\R^{m\times n}}\int_{\R^m} h(x,r,s)
\mu_{s,x}(\md r)\nu_x(\md s)\md x.
\end{aligned}
\end{aligned}
\end{align}
\end{remark}

\subsection{Analysis for couples $\{ (u_k,\nabla u_k)\}$}
For the rest of the article, we are mainly interested in sequences of the form $(u_k,w_k)=(u_k,\nabla u_k)$, with a bounded sequence $\{u_k\}\subset W^{1,p}(\O;\R^m)$, $1\leq p<\infty$, and integrands $h\in \HH^{q,p}(\O,\mathcal{U},\mathcal{R})$ (cf.~\eqref{defHpq}) for some $q<p^*$. Here, $p^*$ is the exponent of the Sobolev embedding, i.e.,
$$
p^*:=\begin{cases}
pn/(n-p) &\text{ if $1\le p<n$},\\
+\infty & \text{otherwise.}
\end{cases}
$$
In particular, such integrands satisfy
\begin{align}\label{h-growth}
|h(x,r,s)|\le C(1+|r|^{q}+|s|^p)\quad\text{for all $x\in \bar\O$, $r\in \R^m$, $s\in \R^{m\times n}$},
\end{align}
with a constant $C\geq 0$.

Since we assume that $q<p^*$, we can represent limits using \eqref{generateall3}, with the added observation that the Young measure generated by 
$\{u_k\}$ is given by $\delta_{u(x)}$ (whence $\mu_{x,s}=\delta_{u(x)}$ for all $s$), where $u$ denotes the weak limit of $\{u_k\}$ in $W^{1,p}(\O;\R^m)$. This gives the following result.
\begin{theorem}\label{thm:repgrad}
Let $(u_k,w_k):=(u_k,\nabla u_k)$, with a bounded sequence $\{u_k\}\subset W^{1,p}(\O;\R^m)$, $1\leq p<\infty$, such that $u_k\rightharpoonup u$
in $W^{1,p}(\O;\R^m)$, $\{(\nabla u_k)\}$ generates the 
(classical) Young measure $\nu_x$ in the sense of \eqref{young} and
$\{(u_k,\nabla u_k)\}$ generates the
measure
$(\sigma,\hat\nu,\hat\mu)$ in the sense of \eqref{generate}. Then for every $h\in \HH^{q,p}(\O,\mathcal{U},\mathcal{R})$ (cf.~\eqref{defHpq}), 
\begin{align}\label{limit21}
&\lim_{k\to\infty}  \int_{\O}h(x,u_k(x),\nabla u_k(x))\,\md x\nonumber\\
&=\int_{\O}\int_{\R^{m\times n}} h(x,u(x),s)\nu_x(\md s)\,\md x\nonumber\\
&\quad+\int_{\bar\O}\int_{\b_\mathcal{R}\R^{m\times n}\setminus\R^{m\times n}}\int_{\b_\mathcal{U}\R^m}h_0^{(1)}(x,r,s)\hat\mu_{s,x}(\md r)\hat\nu_x(\md s)\sigma(\md x)\ .
\end{align}
\end{theorem}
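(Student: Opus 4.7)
The plan is to obtain Theorem~\ref{thm:repgrad} as a direct specialization of the general representation \eqref{generateall3} from Remark~\ref{rem:simplify}, after (a) verifying that the required equi-integrability hypothesis is met for pairs of the form $(u_k,\nabla u_k)$ under the subcritical exponent condition $q<p^*$, and (b) exploiting the fact that the slice-Young-measure in the $r$-variable collapses to a Dirac mass when the first component converges strongly.

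First I would verify that $\{|u_k|^q\}$ is equi-integrable. Since $\{u_k\}$ is bounded in $W^{1,p}(\O;\R^m)$ and $q<p^*$, one can fix some $\tilde q\in(q,p^*]$ with $\tilde q<\infty$: if $p<n$ take $\tilde q=p^*$ and use the continuous Sobolev embedding $W^{1,p}\hookrightarrow L^{p^*}$; if $p\ge n$, any finite $\tilde q>q$ works. This yields a uniform bound in $L^{\tilde q}$, so by de la Vallée–Poussin $\{|u_k|^q\}$ is equi-integrable; equivalently, Rellich–Kondrachov gives $u_k\to u$ strongly in $L^q(\O;\R^m)$ (along a non-relabeled subsequence), which likewise forces equi-integrability via Vitali. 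Either way, the hypothesis of Remark~\ref{rem:simplify} is satisfied.

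Second, equipped with equi-integrability of $\{|u_k|^q\}$, I would invoke \eqref{generateall3} to obtain
\begin{align*}
\lim_{k\to\infty}\int_\O h(x,u_k,\nabla u_k)\,\md x
&= \int_{\bar\O}\!\!\int_{\b_\mathcal{R}\R^{m\times n}\setminus\R^{m\times n}}\!\!\int_{\b_\mathcal{U}\R^m}\! h_0^{(1)}(x,r,s)\,\hat\mu_{s,x}(\md r)\,\hat\nu_x(\md s)\,\sigma(\md x) \\
&\quad + \int_\O\!\int_{\R^{m\times n}}\!\int_{\R^m}\! h(x,r,s)\,\mu_{x,s}(\md r)\,\nu_x(\md s)\,\md x.
\end{align*}
Strong convergence $u_k\to u$ in $L^q$ places us in the setting of Remark~\ref{rem:DMpair-Dirac}, so $\mu_{x,s}=\delta_{u(x)}$ for a.e.\ $x\in\O$, independently of $s$. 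Substituting this into the innermost integral of the second term collapses it to $h(x,u(x),s)$, producing exactly $\int_\O\!\int_{\R^{m\times n}} h(x,u(x),s)\,\nu_x(\md s)\,\md x$, while the boundary/concentration term is already in the form stated in \eqref{limit21}.

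The only genuinely delicate point in the argument is the first step: carefully justifying that the subcritical growth assumption $q<p^*$ together with $p$-boundedness in $W^{1,p}$ really does yield equi-integrability of $\{|u_k|^q\}$ in all three regimes ($p<n$, $p=n$, $p>n$). Once that is in place, the remainder of the proof is essentially bookkeeping: the heavy analytic work has already been carried out in Theorems~\ref{thm1}, \ref{thm1b} and \ref{thm-rep}, together with the slicing-measure identifications of Corollaries~\ref{cor:YMpair-DMpair} and \ref{cor:YMpair-DMpair-b}, so no new compactness or measure-theoretic input is required beyond the Sobolev embedding.
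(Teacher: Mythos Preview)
Your proposal is correct and follows essentially the same route as the paper: the paper derives Theorem~\ref{thm:repgrad} in the sentence preceding it by invoking \eqref{generateall3} (which is applicable because $q<p^*$ forces equi-integrability of $\{|u_k|^q\}$) together with the observation that $\mu_{x,s}=\delta_{u(x)}$. You have simply spelled out the Sobolev-embedding justification of equi-integrability in the three regimes more explicitly than the paper does. One minor remark: since the full sequence $\{u_k\}$ already converges weakly in $W^{1,p}$, Rellich--Kondrachov gives strong convergence of the whole sequence in $L^q$, so no passage to a subsequence is needed there.
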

\begin{remark}\label{rem:qc}
If $h(x,u(x),\cdot)$ is quasiconvex, we can further calculate in \eqref{limit21} as follows:
\begin{align}\label{useqc0}
&\int_{\O}\int_{\R^{m\times n}} h(x,u(x),s)\nu_x(\md s)\,\md x %\nonumber\\
%&\qquad+\int_{\bar\O}\int_{\b_\mathcal{R}\R^{m\times n}\setminus\R^{m\times n}}\int_{\b_\mathcal{U}\R^m}\frac{h(x,r,s)}{1+|r|^q+|s|^p}\hat\mu_{s,x}(\md r)\hat\nu_x(\md s)\sigma(\md x)\nonumber\\
%&
\ge \int_{\O} h(x,u(x),\nabla u(x))\,\md x.
%\nonumber\\
%&\qquad +\int_{\bar\O}\int_{\b_\mathcal{R}\R^{m\times n}\setminus\R^{m\times n}}\int_{\b_\mathcal{U}\R^m}\frac{h(x,r,s)}{1+|r|^q+|s|^p}\hat\mu_{s,x}(\md r)\hat\nu_x(\md s)\sigma(\md x)
\end{align}
\end{remark}
\begin{remark}\label{rem:uniform2}
If $p>n$, $W^{1,p}(\O;\R^m)$ is compactly embedded in $C(\bar\O;\R^m)$, and therefore $u_k\to u$ uniformly on $\bar\O$.
In view of Remark~\ref{uniform}, we then have that $\hat\mu_{s,x}=\delta_{u(x)}$ for $\sigma$-a.e.~$x\in\bar\O$, for $\hat\nu_x$-a.e.~$s\in \beta_\mathcal{R} \R^{m\times n}$. Hence, 
\[
  \int_{\b_\mathcal{U}\R^m}h_0^{(1)}(x,r,s)\hat\mu_{s,x}(\md r)=h_0^{(1)}(x,u(x),s)
\]
in the right hand side of \eqref{limit21}.
\end{remark}

\subsection{Examples}

Below, we give a couple of examples of sequences and measures from Theorem~\ref{thm1} generated by them.

\begin{example}\label{ex-first}
Let $u_k\in W^{1,1}(0,2)$ be such that
$$
u_k(x):=\begin{cases}
0 &\text{ if $0\le x\le 1-1/k$},\\
kx-k+1 &\text{ if $1-1/k\le x\le 1$},\\
-2kx+2k+1 &\text{ if $1\le x\le 1+1/k$},\\
-1 & \text{ if $1+1/k\le x\le 2$.}
\end{cases}
$$
Let $w_k:=u_k'$, i.e.,
$$
w_k(x):=\begin{cases}
0 &\text{ if $0\le x\le 1-1/k$},\\
k &\text{ if $1-1/k\le x\le 1$},\\
-2k &\text{ if $1\le x\le 1+1/k$},\\
0 & \text{ if $1+1/k\le x\le 2$.}
\end{cases}
$$

\begin{figure}[b]
\sidecaption
% Use the relevant command for your figure-insertion program
% to insert the figure file.
% For example, with the graphicx style use
\includegraphics[scale=.65]{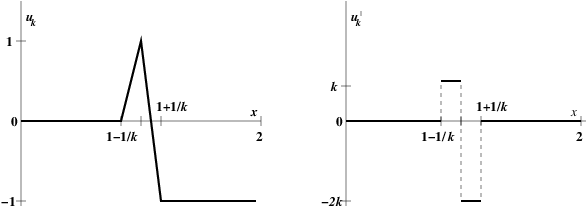}
%
% If no graphics program available, insert a blank space i.e. use
%\picplace{5cm}{2cm} % Give the correct figure height and width in cm
%
\caption{Sequence $\{u_k,u_k'\}_{k\in\N}$ from Example~\ref{ex-first}.}
\label{fig:1}
        % Give a unique label
\end{figure}
Let $f_0\in C(\R)$ be bounded with its primitive denoted by $F$, i.e., $F'=f_0$, $g\in C(\bar\O)$,  and let $\psi=\psi_0(1+|\cdot|)$ where
$\psi_0\in \mathcal{R}$ corresponding to the two-point (or sphere) compactification $\beta_{\mathcal{R}}\R=\R\cup \{\pm \infty\}$, i.e., $\psi_0\in C(\R)$  is such that $\lim_{s\to\pm\infty}\psi_0(s)=:\psi_0(\pm\infty)\in\R$.
Then
\begin{align*}
&\lim_{k\to\infty}\int_0^2f_0(u_k(x))\psi(w_k(x))g(x)\,\md x\\
&=\begin{aligned}[t]
&\lim_{k\to\infty}\Big(\int_0^{1-1/k}f_0(0)\psi_0(0)g(x)\,\md x+\int_{1+1/k}^2f_0(-1)\psi_0(0)g(x)\,\md x\Big)\\
&+\begin{aligned}[t]\lim_{k\to\infty}\Big(&
\int_{1-1/k}^1f_0(kx-k+1)\psi_0(k)(1+k) g(x)\,\md x\\
&+\int_{1}^{1+1/k}f_0(-2kx+2k+1)\psi_0(-2k)(1+2k)g(x)\,\md x\Big)
\end{aligned}
\end{aligned}\nonumber\\
&=\begin{aligned}[t]
&\psi_0(0)(f_0(0)\int_0^1g(x)\,\md x+f_0(-1)\int_1^2g(x)\,\md x)\\
&+\begin{aligned}[t]\lim_{k\to\infty}\Big(&\int_{1-1/k}^1[F(kx-k+1)]'\psi_0(k)\frac{(1+k)}{k}g(x)\,\md x\\
&+\int_{1}^{1+1/k}[F(-2kx+2k+1)]'\psi_0(-2k)\frac{(1+2k)}{-2k}g(x)\,\md x\Big)
\end{aligned}
\end{aligned}\nonumber\\
& =\begin{aligned}[t]
&f_0(0)\psi_0(0)\int_0^1g(x)\,\md x+f_0(-1)\psi_0(0)\int_1^2g(x)\,\md x\nonumber\\
& +g(1)(F(1)-F(0))\psi_0(+\infty)+g(1)(F(1)-F(-1))\psi_0(-\infty)
\end{aligned}\nonumber\\
&=\int_0^2\int_{\beta_\mathcal{U}\R}\int_{\beta_\mathcal{R}\R}g(x)f_0(r)\psi_0(s)\hat{\mu}_{s,x}(\md r)\hat\nu_x(\md s)\sigma(\md x)\ ,
\end{align*}
where
$\sigma=\mathcal{L}^1+3\delta_1$,
$$
\hat\nu_x=\begin{cases}
\delta_0 &\text{ if $x\in[0,1)\cup(1;2]$},\\
\frac13\delta_\infty+\frac23\delta_{-\infty} & \text{ if $x=1$},
\end{cases}
$$
and
$$
\hat\mu_{s,x}=
\begin{cases}
\delta_0 & \text{ if $0\le x<1$},\\
\delta_{-1} &\text{ if $1<x\le 2$},\\
\mathcal{L}^1\llcorner_{(0,1)} & \text{ if $s=+\infty$ and $x=1$},\\
 \frac12\mathcal{L}^1\llcorner_{(-1,1)} & \text{ if $s=-\infty$ and $x=1$.}
\end{cases}
$$
\end{example}

Changing the previous sequence slightly we get the same measure $(\sigma,\hat\nu)$, the same limit of $\{u_k\}$  but a different measure $\hat\mu$.

\begin{example}
Let $u_k\in W^{1,1}(0,2)$ be such that
$$
u_k(x):=\begin{cases}
0 &\text{ if $0\le x\le 1-2/k$},\\
-kx+k-2 &\text{ if $1-2/k\le x\le 1-1/k$},\\
kx-k &\text{ if $1-1/k\le x\le 1$},\\
-kx+k & \text{ if $1\le x\le 1+1/k$},\\
-1 &\text{ if $1+1/k\le x\le 2$.}
\end{cases}
$$
Let $w_k:=u_k'$, i.e.,
$$
w_k(x):=\begin{cases}
0 &\text{ if $0\le x\le 1-2/k$},\\
-k &\text{ if $1-2/k\le x\le 1-1/k$},\\
k &\text{ if $1-1/k\le x\le 1$},\\
-k & \text{ if $1\le x\le 1+1/k$},\\
 0 &\text{ if $1+1/k\le x\le 2$.}
\end{cases}
$$

Then a computation analogous to the one above   shows that

$\sigma=\mathcal{L}^1+3\delta_1$,
$$
\hat\nu_x=\begin{cases}
\delta_0 &\text{ if $x\in[0,1)\cup(1;2]$},\\
\frac13\delta_\infty+\frac23\delta_{-\infty} & \text{ if $x=1$},
\end{cases}
$$
and
$$
\hat\mu_{s,x}=
\begin{cases}
\delta_0 & \text{ if $0\le x<1$},\\
\delta_{-1} &\text{ if $1<x\le 2$},\\
\mathcal{L}^1\llcorner_{(-1,0)} & \text{ if $s=-\infty$ and  $x=1$,  }\\
\mathcal{L}^1\llcorner_{(-1,0)} & \text{ if  $s=+\infty$ and  $x=1$,  }
\end{cases}
$$
\end{example}

These two examples show that $\hat\mu$ captures behavior of $\{u_k\}$ and cannot be read off either from  $(\sigma,\hat\nu)$ and/or from $u$.

\begin{example}
In the next example,  we just   set $u_k:=u$, where $u(x):=0$ if $x\in[0,1]$ and $u(x)=-1$ if $x\in(1;2]$,  and $\{w_k\}_{k\in\N}$ for all $k\in\N$ as before. This gives us
\begin{align*}
&\lim_{k\to\infty}\int_0^2f_0(u(x))\psi(w_k(x))g(x)\,\md x\\
&= \begin{aligned}[t]
&\lim_{k\to\infty}\Big(\int_0^{1-1/k}f_0(0)\psi_0(0)g(x)\,\md x+\int_{1+1/k}^2f_0(-1)\psi_0(0)g(x)\,\md x\Big)\\
&+\lim_{k\to\infty}\Big(\int_{1-1/k}^1f_0(0)\psi_0(k)(1+k)g(x)\,\md x+\int_{1}^{1+1/k}f_0(-1)\psi_0(-2k)(1+2k)g(x)\,\md x\Big)
\end{aligned}\nonumber\\
&= \begin{aligned}[t]
&f_0(0)\psi_0(0)\int_0^1g(x)\,\md x+f_0(-1)\psi_0(0)\int_1^2g(x)\,\md x\\
&+\lim_{k\to\infty}\Big(k\int_{1-1/k}^1 f_0(0)\psi_0(k)\frac{1+k}{k}g(x)\,\md x
+k\int_{1}^{1+1/k}f_0(-1)\psi_0(-2k)\frac{1+2k}{k}g(x)\,\md x\Big)
\end{aligned}\nonumber\\
&= \begin{aligned}[t]
&f_0(0)\psi_0(0)\int_0^1g(x)\,\md x+f_0(-1)\psi_0(0)\int_1^2g(x)\,\md x\\
&+g(1)f_0(0)\psi_0(+\infty)+2g(1)f_0(-1)\psi_0(-\infty))
\end{aligned}\nonumber\\
&=\int_0^2\int_{\beta_\mathcal{R}\R}\int_{\beta_\mathcal{U}}g(x)f_0(r)\psi_0(s)\nu_{s,x}(\md r)\hat\nu_x(\md s)\sigma(\md x)\ ,
\end{align*}
where
 $\sigma=\mathcal{L}^1+3\delta_1$,
$$
\hat\nu_x=\begin{cases}
\delta_0 &\text{ if $x\in[0,1)\cup(1;2]$},\\
\frac13\delta_\infty+\frac23\delta_{-\infty} & \text{ if $x=1$},
\end{cases}
$$
and
$$
\hat\mu_{s,x}=
\begin{cases}
\delta_0 & \text{ if $0\le x< 1$},\\
\delta_0 &\text{ if $x=1$ and $s=+\infty$},\\
\delta_{-1}& \text{ if $x=1$ and $s=-\infty$},\\
\delta_{-1} &\text{ if $1<x\le 2$}.\\
\end{cases}
$$

\end{example}

In the  example below,  we calculate the measure $\hat\mu$ of the strongly converging sequence.

\begin{example}\label{ex-simple}
Let $p=1$, consider the one-point compactification $\beta_{\mathcal{R}}\R=\R\cup \{\infty\}$ of $\R$, and let
 $\{u_k\}_{k\in\N}\subset W^{1,1}(0,2)$, $u_k\rightharpoonup u$, be a sequence of nondecreasing functions such that
 $u_k(0)=0$ and $u_k(2)=1$ for all $k\in\N$. In addition, suppose that
$\{u'_k\}_{k\in\N}\subset L^1(0,2)$ converges to zero in measure and it concentrates  at $x=1$,
i.e., $\{u'_k\}$ generates $(\sigma,\hat\nu)\in\cdm$ given by
$$
\sigma=\mathcal{L}^1+\delta_1,\quad \hat\nu_x=\begin{cases}
\delta_0 &\text{ if $x\in[0, 1)\cup(1,2]$},\\
  \delta_\infty & \text{ if $x=1$.}
\end{cases}
$$

Moreover, let $\alpha\ge 0$, let $f_0(r)\in C_0(\R)$ be such that
$$f_0(r)=
\begin{cases}
r^\alpha & \text{ if  $0\le r\le 1$}\\
1 & \text{ for $r\ge 1$ }%\\
%0 & \text{ for  $r\le 0$ }
\end{cases}
$$
and let  $\psi(s):=|s|$.
As $u_k$ is nondecreasing it must always satisfy $u_k\in [0,1]$, so that $f_0(u_k)=u_k^\alpha$, and $u_k'\ge 0$. Consequently, in view of Theorem~\ref{thm1}
\begin{align*}\lim_{k\to\infty}\int_0^2 f_0(u_k(x))\psi(u'_k(x))\,\md x&=\int_0^2\int_{\beta_{\mathcal{R}}\R}\int_{\beta_{\mathcal{U}}\R} r^\alpha\hat\mu_{s,x}(\md r)\frac{s}{1+|s|}\hat\nu_x(\md s)\sigma(\md x)\\
& = \int_{\beta_{\mathcal{U}}\R}r^\alpha\hat\mu_{\infty,1}(\md r)\ .\end{align*}
On the other hand,
\begin{align*}
\lim_{k\to\infty}\frac1{\alpha+1}(u_k^{\alpha+1}(2)-u_k^{\alpha+1}(0))&=\lim_{k\to\infty}\int_0^2 \frac1{\alpha+1}(u_k^{\alpha+1}(x))'\,\md x=\lim_{k\to\infty}\int_0^2 u^\alpha_k(x)u'_k(x)\,\md x\\
&= \int_{\beta_{\mathcal{U}}\R}r^\alpha\hat\mu_{\infty,1}(\md r)=\lim_{k\to\infty}\int_{u_k(0)}^{u_k(2)} r^\alpha\,\md r= \int_{0}^{1} 
r^\alpha \,\md r \ .
\end{align*}
Since $\alpha\geq 0$ is arbitrary and the polynomials are dense in the continuous functions on all compact subsets of $\R$, we infer that
$$\hat\mu_{s,x}=\begin{cases}
\delta_{u(x)} &\text{ if $x\in [0,1)\cup(1,2]$,}\\
\mathcal{L}^1\llcorner_{(0,1)} &\text{ if $x=1$ and $s=\infty$.}
\end{cases}
$$
This means that only values of limits of $u_k$ at $x=0$ and $x=2$ influence $\hat\mu_{\infty,1}$, i.e., the measure at the point where $\sigma$ concentrates.
\end{example}

\section{Applications to weak lower semicontinuity in Sobolev spaces}

We here focus on weak lower semikcontinuity of "signed" integral functionals in $W^{1,p}$, i.e., functional whose integrand may have a negative part which has $p$-growth in the gradient variable. 
The case of non-negative integrands (or weaker growth in the negative direction) is well-known, see e.g.~\cite{af}. 

Throughout this section, let $\mathcal{U}$ and $\mathcal{R}$ denote rings of bounded continuous functions corresponding to suitable metrizable compactifications $\beta_{\mathcal{U}}\R^m$ and $ \beta_{\mathcal{R}}\R^{m\times n}$ of $\R^m$ and $\R^{m\times n}$, respectively, as before.
The choice of these rings can be adapted to the particular integrand $h$ at hand in the results presented below. Compactifications by the sphere are sufficiently rich for most practical purposes.

If $p>n$, we can exploit the embedding of $W^{1,p}(\O;\R^m)$ into continuous functions on $\bar\O$. Still, even for quasiconvex integrands concentration effects near the boundary of the domain can prevent lower semicontinuity. However, as it turns out this is the only remaining obstacle. Unlike in the
related result of Ball and Zhang \cite{ball-zhang} where small measurable (but otherwise pretty unknown) sets are removed from the domain, for us it is enough to "peel" away a layer near $\partial \O$:
\begin{lemma}(Peeling lemma for $p>n$)
Let $\O\subset\R^n$ be a bounded domain with a boundary of class $C^1$, let $\infty>p>n$ and let 
$h\in \HH^{q,p}(\O,\mathcal{U},\mathcal{R})$ (cf.~\eqref{defHpq}). Moreover, assume that $h(x,r,\cdot)$ is quasiconvex for a.e.~$x\in\O$ (and therefore all $x\in\bar\O$, by continuity) and every $r\in \r^m$, and
let $\{u_k\}\subset W^{1,p}(\O;\R^m)$ be a bounded sequence with $u_k\rightharpoonup u$ in $W^{1,p}(\O;\R^m)$.
Then there exists an increasing sequence of open set $\O_j$ (possibly depending on the subsequence of $\{u_k\}$) 
with boundary of class $C^\infty$, $\bar\O_j \subset \O$ and $\bigcup_j \O_j=\O$ such that
$$
	\liminf_{k\to\infty} \int_{\O_j} h(x,u_k(x),\nabla u_k(x))\,\md x\geq \int_{\O_j} h(x,u(x),\nabla u(x))\,\md x.
$$
\end{lemma}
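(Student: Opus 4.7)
The plan is to exploit the representation formula~\eqref{limit21} on a carefully peeled domain $\O_j$, and then to control the two resulting terms via quasiconvexity: the absolutely continuous part by Remark~\ref{rem:qc}, and the concentration part by condition~\eqref{rem6} of Theorem~\ref{suff1} applied after a local cut-off that turns the concentration into one generated by a sequence with constant boundary trace.

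First I would pass to a subsequence (not relabelled) realising $\liminf_k\int_{\O_j}h(x,u_k,\nabla u_k)\,\md x$ as a genuine limit, then extract a further subsequence so that $\{\nabla u_k\}$ generates a Young measure $\{\nu_x\}$ and $\{(u_k,\nabla u_k)\}$ generates the triple $(\sigma,\hat\nu,\hat\mu)$ of Theorem~\ref{thm1}. Since $p>n$, Rellich--Kondrachov gives $u_k\to u$ uniformly on $\bar\O$, so $\hat\mu_{s,x}=\delta_{u(x)}$ by Remark~\ref{rem:uniform2}. For the peeling, take a smooth positive approximation $\tilde d\in C^\infty(\O)$ of $\operatorname{dist}(\cdot,\partial\O)$ vanishing at $\partial\O$, and pick $\eps_j\downarrow 0$ avoiding both the Lebesgue-null set of critical values of $\tilde d$ (Sard) and the at-most-countable set $\{\eps:\sigma(\{\tilde d=\eps\})>0\}$ (finiteness of $\sigma$). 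Then $\O_j:=\{\tilde d>\eps_j\}$ is increasing, has $C^\infty$ boundary, $\bar\O_j\subset\O$, $\bigcup_j\O_j=\O$, and $\sigma(\partial\O_j)=\mathcal{L}^n(\partial\O_j)=0$. This vanishing lets me localise Theorem~\ref{thm:repgrad} (by approximating $\mathbf{1}_{\O_j}$ in $L^1(\sigma+\mathcal{L}^n)$ by continuous functions), obtaining along the chosen subsequence
\begin{equation*}
\lim_{k\to\infty}\int_{\O_j}h(x,u_k,\nabla u_k)\,\md x=A_j+B_j,
\end{equation*}
where $A_j:=\int_{\O_j}\int_{\R^{m\times n}}h(x,u(x),s)\,\nu_x(\md s)\,\md x$ and $B_j:=\int_{\bar\O_j}\int_{\beta_\mathcal{R}\R^{m\times n}\setminus\R^{m\times n}}h_0^{(1)}(x,u(x),s)\,\hat\nu_x(\md s)\,\sigma(\md x)$ (the inner integral over $\beta_\mathcal{U}\R^m$ collapsed via $\hat\mu_{s,x}=\delta_{u(x)}$). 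Remark~\ref{rem:qc} immediately yields $A_j\geq\int_{\O_j}h(x,u,\nabla u)\,\md x$.

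The main obstacle is to show $B_j\geq 0$. Here the peeling is crucial: every $x_0\in\bar\O_j$ lies in the interior of $\O$, so the concentration at $x_0$ can be isolated by a local cut-off. Fix such an $x_0$, pick $r>0$ with $\overline{B(x_0,r)}\subset\O$ and $\eta\in C_c^\infty(B(x_0,r);[0,1])$ with $\eta\equiv 1$ on $B(x_0,r/2)$, and set $v_k:=u+\eta(u_k-u)$. Uniform convergence $u_k\to u$ renders the shell term $(\nabla\eta)(u_k-u)$ uniformly small and equi-integrable, so $\{v_k\}$ generates a DM-measure $(\sigma',\hat\nu')$ that coincides with $(\sigma,\hat\nu)$ on $B(x_0,r/2)$; moreover $v_k-u\in W_0^{1,p}(\O;\R^m)$, so $\{v_k\}$ has constant boundary trace. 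Theorem~\ref{suff1} thus applies, and its condition~\eqref{rem6} gives, for $\sigma'$-a.e.~$x$ and every $\psi_0\in\mathcal{R}$ with $Q\psi>-\infty$ ($\psi:=(1+|\cdot|^p)\psi_0$), $\int_{\beta_\mathcal{R}\R^{m\times n}\setminus\R^{m\times n}}\psi_0\,\hat\nu'_x(\md s)\geq 0$. I apply this at $x=x_0$ with $\psi_0(s):=h(x_0,u(x_0),s)/(1+|s|^p)$: this $\psi_0$ lies in $\mathcal{R}$, being the sum $h_0^{(1)}(x_0,u(x_0),s)+h_0^{(2)}(x_0,u(x_0),s)(1+|u(x_0)|^q)/(1+|s|^p)$ of $\mathcal{R}$-elements; its associated $\psi(s)=h(x_0,u(x_0),s)$ is quasiconvex, hence $Q\psi=\psi>-\infty$; and on $\beta_\mathcal{R}\R^{m\times n}\setminus\R^{m\times n}$ the continuous extension of $\psi_0$ coincides with that of $h_0^{(1)}(x_0,u(x_0),\cdot)$ because the correction term lies in $C_0$ and so vanishes at infinity. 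Thus $\int_{\beta_\mathcal{R}\R^{m\times n}\setminus\R^{m\times n}}h_0^{(1)}(x_0,u(x_0),s)\,\hat\nu_{x_0}(\md s)\geq 0$. A countable covering of $\bar\O_j$ by such neighbourhoods yields $B_j\geq 0$, and combining with the bound on $A_j$ produces the desired lower semicontinuity.
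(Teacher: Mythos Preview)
Your proof is correct and follows the same approach as the paper: pass to a subsequence generating the measures, choose the $\O_j$ so that $\sigma(\partial\O_j)=0$, apply Theorem~\ref{thm:repgrad} together with Remark~\ref{rem:uniform2}, and then bound the Young-measure term by Remark~\ref{rem:qc} and the concentration term via \eqref{rem6}. Your explicit local cut-off construction to legitimize the use of \eqref{rem6} at interior points (the paper invokes \eqref{rem6} directly, with the slightly different choice $\psi_0=h_0^{(1)}(x,u(x),\cdot)$ and the observation that $\psi-h(x,u(x),\cdot)$ is bounded) is a welcome added detail, since Theorem~\ref{suff1} as stated requires a fixed boundary trace which the original $\{u_k\}$ need not have on $\O_j$.
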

\begin{proof}
We select a subsequence of $\{u_k\}$ so that ``$\liminf=\lim$'' and  such that $\{(u_k)\}$ generates a Young measure $\nu$, and $\{(u_k,\nabla u_k)\}$ generates a measure $(\sigma,\hat\nu,\hat\mu)$ in the sense of \eqref{generate}. Now let $\O_0:=\emptyset$. For each $j$, we choose an open set $\O_j$ with smooth boundary such that
\[
  K_j:=\bar\O_{j-1}\cup \{x\in\O: \operatorname{dist}(x;\partial \O)\geq\tfrac{1}{j}\} \subset \O_j\subset \bar{\O}_j\subset \O
\]
and \vspace*{-3ex}
\begin{align}\label{noboundarycharge}
  \sigma(\partial \O_j)=0
\end{align}
Here, notice that since the distance of the compact set $K_j$ to $\partial\O$ is positive, we can find uncountably many pairwise disjoint candidates for $\O_j$. Since $\sigma$ is a finite measure, all but countably many of them must satisfy \eqref{noboundarycharge}.
Clearly, the measure generated by $\{(u_k,\nabla u_k)\}$ on $\O_j$ coincides with $(\sigma,\hat\nu,\hat\mu)$ on the open set $\O_j$,
and due to \eqref{noboundarycharge} even on $\bar\O_j$. 
Hence, by Theorem~\ref{thm:repgrad}, Remark~\ref{rem:uniform2}, 
\begin{align*}
\lim_{k\to\infty}  \int_{\O_j}h(x,u_k(x),\nabla u_k(x))\,\md x 
&=\int_{\O_j}\int_{\R^{m\times n}} h(x,u(x),s)\nu_x(\md s)\,\md x \\
&\quad+\int_{\bar\O_j}\int_{\b_\mathcal{R}\R^{m\times n}\setminus\R^{m\times n}}h_0^{(1)}(x,u(x),s)\hat\nu_x(\md s)\sigma(\md x)\\
&\geq \int_{\O_j} h(x,u(x),\nabla u(x))\,\md x.
\end{align*}
Here, the inequality above is due to Remark~\ref{rem:qc} and \eqref{rem6} with $\psi_0(s):=h_0^{(1)}(x,u(x),s)$ (separately applied for each $x$); for $\psi(s):=(1+|s|^p)\psi_0(s)$ and its quasiconvex hull $Q\psi$ we have $Q\psi>-\infty$ because $h(x,u(x),\cdot)$ is quasiconvex 
and $\psi(s)-h(x,u(x),s)=h_0^{(2)}(x,u(x),s)(1+|u(x)|^q)$ is bounded. 
\end{proof}

To get lower semicontinuity for all sequences and on the whole domain, we need an extra condition on the integrand on the boundary, namely, $p$-quasisubcritical growth from below, as in the case of integrands without explicit dependence on $u$ (cf.~Theorem~\ref{kroemer}).
\begin{theorem}\label{thm:p-greater-n}
Let $\O\subset\R^n$ be a bounded domain with a boundary of class $C^1$, let $\infty>p>n$ and let 
$h\in \HH^{q,p}(\O,\mathcal{U},\mathcal{R})$ (cf.~\eqref{defHpq}). 
Then, if $h(x,r,\cdot)$ is  quasiconvex for a.e.~$x\in\O$ (and therefore all $x\in\bar\O$, by continuity) and all $r\in\R^m$ and $\tilde h(x,s):=h(x,u(x),s)$ has $p$-quasisubcritical growth from below (see \eqref{pqslb}) for all $x\in\partial\O$ and all $u\in W^{1,p}(\O;\R^m)$,
$w\mapsto \int_\O h(x,w(x),\nabla w(x))\,\md x $ is weakly lower semicontinuous in $W^{1,p}(\O;\R^m)$.
\end{theorem}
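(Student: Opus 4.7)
The plan is to reduce the statement to Theorem~\ref{kroemer} applied to the \emph{frozen} integrand $\tilde h(x, s) := h(x, u(x), s)$, exploiting the fact that $p > n$ forces $u_k \to u$ uniformly on $\bar\O$ via the compact Sobolev embedding $W^{1,p}(\O;\R^m) \hookrightarrow C(\bar\O;\R^m)$.

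\emph{Step 1 (Uniform convergence).} Fix a sequence $u_k \rightharpoonup u$ in $W^{1,p}(\O;\R^m)$. Since $p>n$, the compact embedding yields $u_k \to u$ uniformly on $\bar\O$; in particular $\sup_k\|u_k\|_{L^\infty} < \infty$, so all values $u_k(x),u(x)$ lie in a common bounded (hence relatively compact) subset of $\R^m$.

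\emph{Step 2 (Replacement).} I claim
$$
\lim_{k\to\infty} \int_\O \bigl[h(x,u_k,\nabla u_k) - h(x, u, \nabla u_k)\bigr]\,\md x = 0.
$$
Decompose $h = h_0^{(1)}(x,r,s)(1+|s|^p) + h_0^{(2)}(x,r,s)(1+|r|^q)$ as in \eqref{defHpq}, with $h_0^{(i)}$ continuous on the compact set $\bar\O \times \beta_\mathcal{U}\R^m \times \beta_\mathcal{R}\R^{m\times n}$, hence uniformly continuous there. Since $u_k \to u$ uniformly in $\R^m$ and the embedding $\R^m \hookrightarrow \beta_\mathcal{U}\R^m$ is a homeomorphism onto its (relatively compact) image, uniform convergence transfers to the compactification, so
$$
\eps_k := \sup_{(x, s)\in\bar\O\times\beta_\mathcal{R}\R^{m\times n}} \bigl|h_0^{(i)}(x, u_k(x), s) - h_0^{(i)}(x, u(x), s)\bigr| \longrightarrow 0.
$$
Combined with the $L^1$-boundedness of $\{1+|\nabla u_k|^p\}$, the uniform boundedness of $\{1+|u_k|^q\}$, and $|u_k|^q \to |u|^q$ uniformly, a triangle inequality proves the claim.

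\emph{Step 3 (Apply Theorem~\ref{kroemer}).} After Step 2, it suffices to show
$$
\liminf_{k\to\infty} \int_\O \tilde h(x, \nabla u_k)\,\md x \ \geq\ \int_\O \tilde h(x, \nabla u)\,\md x,
$$
i.e., weak lower semicontinuity of $J(v):=\int_\O \tilde h(x,\nabla v)\,\md x$ in $W^{1,p}(\O;\R^m)$. I verify the hypotheses of Theorem~\ref{kroemer} for $\tilde h$: (i) continuity of $\tilde h$ follows from continuity of $h$ together with $u\in C(\bar\O;\R^m)$; (ii) the function
$$
\frac{\tilde h(x,s)}{1+|s|^p}=h_0^{(1)}(x,u(x),s)+h_0^{(2)}(x,u(x),s)\frac{1+|u(x)|^q}{1+|s|^p}
$$
is bounded and continuous in $x$ uniformly in $s$, thanks to uniform continuity of the $h_0^{(i)}$ on the aforementioned compact set and continuity of $u$; (iii) quasiconvexity of $\tilde h(x,\cdot)$ is inherited from the assumption on $h(x,r,\cdot)$; (iv) the $p$-quasisubcritical growth from below of $\tilde h$ at each $x_0\in\partial\O$ is precisely the standing hypothesis on $h$ with respect to this particular weak limit $u$. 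Theorem~\ref{kroemer} therefore yields the required inequality, completing the proof.

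\emph{Main obstacle.} The only non-routine ingredient is Step 2, where one must propagate uniform convergence of $u_k\to u$ through the compactification on which $h_0^{(i)}$ is continuous. This is handled by restricting to the image of a bounded subset of $\R^m$, where the subspace topologies induced by $\R^m$ and by $\beta_\mathcal{U}\R^m$ agree. Note that this argument bypasses the Peeling Lemma entirely; alternatively, one could combine the Peeling Lemma with a localized boundary-layer estimate derived from $p$-qscb (mirroring the strategy behind Theorem~\ref{kroemer}), but the present reduction is more direct.
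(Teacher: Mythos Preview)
Your proof is correct and follows the same high-level strategy as the paper: use the compact embedding $W^{1,p}\hookrightarrow C(\bar\O;\R^m)$ for $p>n$ to freeze the $r$-variable at $u(x)$, and then establish weak lower semicontinuity of the resulting $u$-independent functional. The difference lies in the execution of both steps. For the replacement $\int_\O h(x,u_k,\nabla u_k)\,\md x-\int_\O h(x,u,\nabla u_k)\,\md x\to 0$, the paper invokes its new parametrized-measure machinery (Remark~\ref{rem:uniform2}: $\hat\mu_{s,x}=\delta_{u(x)}$, so the measures generated by $\{(u_k,\nabla u_k)\}$ and $\{(u,\nabla u_k)\}$ coincide, and the limits agree by \eqref{generateall3}/\eqref{limit21}); you instead give a direct elementary estimate via uniform continuity of the $h_0^{(i)}$ on the compactification. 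For the frozen functional, the paper alludes to Theorem~\ref{thm:repgrad} (the representation theorem), while you verify the hypotheses of Theorem~\ref{kroemer} explicitly and invoke it. Your route is more self-contained and transparent, and in particular makes visible exactly where the $p$-qscb assumption enters; the paper's route, on the other hand, illustrates how the anisotropic measures $(\sigma,\hat\nu,\hat\mu)$ developed in the paper absorb the replacement step without any ad hoc estimate.
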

\begin{proof}
Let $u_k\rightharpoonup u$ weakly in $W^{1,p}(\O;\R^m)$.
In view of Remark~\ref{rem:uniform2}, the measures generated by (subsequences of) $\{(u,\nabla u_k)\}$ and $\{(u_k,\nabla u_k)\}$ in the sense of \eqref{generate} always coincide. As a consequence of \eqref{generateall3} and \eqref{limit21},
it therefore suffices to show that for each $u\in W^{1,p}(\O;\R^m)\subset C(\bar\O;\R^m)$, $w\mapsto \int_\O h(x,u(x),\nabla w(x))\,\md x $ is weakly lower semicontinuous.
The latter follows from Theorem~\ref{thm:repgrad}.
%Here, notice that if $(x,s)\mapsto h(x,u(x),s)$ satisfies \eqref{pqslb}, so does its "$p$-growth-part" 
%$h_0^1(x,u(x),s)(1+|s|^p)$ as introduced in \eqref{defHpq}, because the rest $h_0^2(x,u(x),s)(1+|u(x)|^r)$ is bounded in $s$ and therefore  does not influence \eqref{pqslb}, which is a kind of growth condition in $s$ on the negative part of $h(x,u(x),s)$.
\end{proof}
\begin{remark}
In Theorem~\ref{thm:p-greater-n}, quasiconvexity of $h(x,u(x),\cdot)$ in $\O$ and $p$-qscb of $h(x,u(x),\cdot)$ at every $x\in \partial\O$ are also necessary for weak lower semicontinuity. We omit the details.
\end{remark}
As already briefly pointed out in the introduction, the situation becomes significantly more complicated if $p\leq n$. Using our measures to express the limit as in Theorem~\ref{thm:repgrad}, we can at least reduce the problem to a property of an integrand without explicit dependence on $u$, for each given sequence:
\begin{proposition}\label{prop:plessn}
Let $p\leq n$, suppose that $h(x,r,\cdot)$ is quasiconvex, $h\in \HH^{q,p}(\O,\mathcal{U},\mathcal{R})$, and let
$\{u_k\}\subset W^{1,p}(\O;\R^m)$
be a bounded sequence such that 
$u_k\rightharpoonup u$ and $\{(u_k,\nabla u_k)\}$ generates a measure $(\sigma,\hat\nu,\hat\mu)$ in the sense of \eqref{generate}.
Then 
$$
  \liminf_{k\to\infty} \int_\O h(x,u_k,\nabla u_k)\,\md x\geq \int_\O h(x,u,\nabla u)\,\md x,
$$
provided that for $\sigma$-a.e.~$x\in \bar\O$,
\begin{align}\label{plessn-cond}
  \int_{\bar \O}\int_{\beta_{\mathcal R}\R^{m\times n}\setminus \R^{m\times n}}\tilde{h}(x,s)\,\hat\nu_x(\md s)\sigma(\md x)\geq 0,	
\end{align}
where $\tilde{h}(x,s):=(1+|s|^p)\int_{\beta_{\mathcal U}} h_0^{(1)}(x,r,s) \,\hat\mu_{x,s}(\md r)$.
Here, recall that $h(x,r,s)=h_0^{(1)}(x,r,s)(1+|s|^p)+h_0^{(2)}(x,r,s)(1+|r|^q)$, cf.~\eqref{defHpq}.
\end{proposition}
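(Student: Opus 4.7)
The plan is to pass to a subsequence along which the $\liminf$ is attained, apply the representation provided by Theorem~\ref{thm:repgrad} to split the limit into a bulk term and a ``concentration at infinity'' term, and then estimate each term separately.

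By extracting a subsequence (not relabeled), I may assume that $\liminf_k\int_\O h(x,u_k,\nabla u_k)\md x = \lim_k \int_\O h(x,u_k,\nabla u_k)\md x$, that $\{\nabla u_k\}$ generates a Young measure $\nu_x$ as in \eqref{young}, and that $\{(u_k,\nabla u_k)\}$ generates the triple $(\sigma,\hat\nu,\hat\mu)$ as in \eqref{generate}. The implicit growth hypothesis $q<p^*$ (needed for Theorem~\ref{thm:repgrad}) together with $u_k\rightharpoonup u$ in $W^{1,p}$ ensures $u_k\to u$ in $L^q$ by compact embedding, so Theorem~\ref{thm:repgrad} gives
\begin{align*}
\lim_{k\to\infty}\int_\O h(x,u_k,\nabla u_k)\md x
= {} & \int_\O\int_{\R^{m\times n}} h(x,u(x),s)\,\nu_x(\md s)\md x \\
& {}+ \int_{\bar\O}\int_{\b_\mathcal{R}\R^{m\times n}\setminus \R^{m\times n}}\int_{\b_\mathcal{U}\R^m} h_0^{(1)}(x,r,s)\,\hat\mu_{s,x}(\md r)\,\hat\nu_x(\md s)\,\sigma(\md x).
\end{align*}

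For the bulk term, quasiconvexity of $h(x,u(x),\cdot)$ together with the Jensen-type inequality for $W^{1,p}$-gradient Young measures (Remark~\ref{rem:qc}) immediately yields
\[
  \int_\O\int_{\R^{m\times n}} h(x,u(x),s)\,\nu_x(\md s)\md x \geq \int_\O h(x,u(x),\nabla u(x))\md x.
\]
For the concentration term, I introduce $\psi_0(x,s):=\int_{\b_\mathcal{U}\R^m} h_0^{(1)}(x,r,s)\,\hat\mu_{s,x}(\md r)$, a continuous function on $\bar\O\times\b_\mathcal{R}\R^{m\times n}$, and observe that $\tilde h(x,s)=(1+|s|^p)\psi_0(x,s)$ by definition. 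On the boundary $\b_\mathcal{R}\R^{m\times n}\setminus\R^{m\times n}$ the natural DiPerna--Majda convention (as in \eqref{rem6}) identifies $\tilde h$ with its ``angular'' representative $\psi_0$, because only the latter makes sense against the ``concentration'' part of $\hat\nu_x$, while the factor $(1+|s|^p)$ merely encodes the prescribed $p$-growth. Consequently, the concentration term equals the integral in the hypothesis \eqref{plessn-cond} and is therefore non-negative. Combining the two estimates finishes the proof.

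The only point that requires care---and, I expect, the main obstacle---is the notational matching between the concentration term delivered by Theorem~\ref{thm:repgrad} and the integral appearing in \eqref{plessn-cond}: one has to interpret $\tilde h$ on $\b_\mathcal{R}\R^{m\times n}\setminus \R^{m\times n}$ as the continuous extension of $\tilde h/(1+|\cdot|^p)=\psi_0$, consistent with the way test functions enter the DiPerna--Majda framework via \eqref{upes}. Once this identification is in place, the rest of the argument is a direct combination of Theorem~\ref{thm:repgrad}, Remark~\ref{rem:qc} and assumption~\eqref{plessn-cond}.
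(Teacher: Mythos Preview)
Your argument is correct and is precisely the paper's approach: the paper's proof consists of the single sentence ``This is a straightforward consequence of Theorem~\ref{thm:repgrad} and Remark~\ref{rem:qc},'' and you have simply spelled out those two steps together with the identification of the concentration term with hypothesis~\eqref{plessn-cond}. One small inaccuracy: the function $\psi_0(x,s)=\int_{\b_\mathcal{U}\R^m} h_0^{(1)}(x,r,s)\,\hat\mu_{s,x}(\md r)$ is in general only $\sigma\otimes\hat\nu$-measurable, not continuous, since $\hat\mu_{s,x}$ is merely weakly* measurable; this does not affect your argument, which only needs measurability to make sense of the integral.
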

\begin{proof}
This is a straightforward consequence of Theorem~\ref{thm:repgrad} and Remark~\ref{rem:qc}. 
\end{proof}
\begin{remark}
Given $h\in \HH^{q,p}(\O,\mathcal{U},\mathcal{R})$, 
$h_0^{(1)}(x,r,s)$ is uniquely determined for $s\in \beta_{\mathcal R}\R^{m\times n}\setminus \R^{m\times n}$, but not for 
$s\in \R^{m\times n}$. 
Of course,  \eqref{plessn-cond} actually is only a condition on the restriction of 
$h_0^{(1)}$ to $\bar\O\times \beta_{\mathcal U}\R^m\times (\beta_{\mathcal R}\R^{m\times n}\setminus \R^{m\times n})$.
\end{remark}

\section{Concluding remarks}
We have seen that  generalized DiPerna-Majda measures introduced here can be helpful in proofs of weak lower semicontinuity. Other applications are, for example, in impulsive control problems where the concentration of controls typically results in discontinuity of the state variable \cite{dhmktw}.  An open challenging problem is to find some explicit characterization of generalized Diperna-Majda measures generated by pairs of functions and their gradients, namely  $\{(u_k,\nabla u_k)\}\subset W^{1,p}(\O;\R^m)\times L^p(\O;\R^{m\times n})$. This could then help us to find necessary and sufficient conditions for weak lower semicontinuity of $u\mapsto \int_\O h(x, u(x),\nabla u(x))\,\md x$ in $W^{1,p}(\O;\R^m)$ for $1<p<+\infty$ and  for $h\in \mathbb{H}^p$. 

\bigskip
\begin{acknowledgement} This work was partly   done during MK's visiting Giovanni-Prodi professorship at the University of W\"{u}rzburg, Germany.  The hospitality and support of the Institute of Mathematics is gratefully acknowledged. This work was also supported by GA\v{C}R through projects 16-34894L and 17-04301S.
\end{acknowledgement}
\vspace*{1cm}

\bigskip

\bigskip

\end{document}